\newtheorem{Thm}[equation]{Theorem}
\newtheorem{Lem}[equation]{Lemma}
\newtheorem{Cor}[equation]{Corollary}
\begin{document}
\baselineskip=12pt

\title{Normal forms for rational 3-tangles}
\author{Bo-hyun Kwon, Jung Hoon Lee}
\date{Thursday, March 9,  2023}
\maketitle
\begin{abstract} 
 In this paper, we define the \textit{normal form} of collections of disjoint  three \textit{bridge arcs}  for  a given rational $3$-tangle. We show that  there is a sequence of \textit{normal jump moves} which leads one to the other for two normal forms of the same rational 3-tangle. 
\end{abstract}

\section{Introduction}

Originally, rational tangles introduced by John Conway (1970,~\cite{0}) meant  rational $2$-tangles. J. Conway completed the classification of rational tangles by assigning each rational tangle to the corresponding rational number. Moreover, if two rational tangles are equivalent then their corresponding rational numbers are the same.  Based on the definition of rational tangles, we can extend the meaning of the rational tangles to define rational $n$-tangles, $n\geq 2$.  For a long time after the rational tangle is defined  there are many attempts to classify rational $3$-tangles in advance.
As one of the trials to classify them, the first author~\cite{1} gave an algorithm to compare two rational $3$-tangles. However, it does not give a certain representative of each rational $3$-tangle. In this paper, we define the \textit{normal form} which is a special collection of disjoint three \textit{bridge arcs} of a given rational $3$-tangle $T$. We provide a clue to give a certain representative of each rational $3$-tangle.

\subsection{Bridge disk and bridge arc.}  
 \begin{figure}[htb]
\includegraphics[scale=.4]{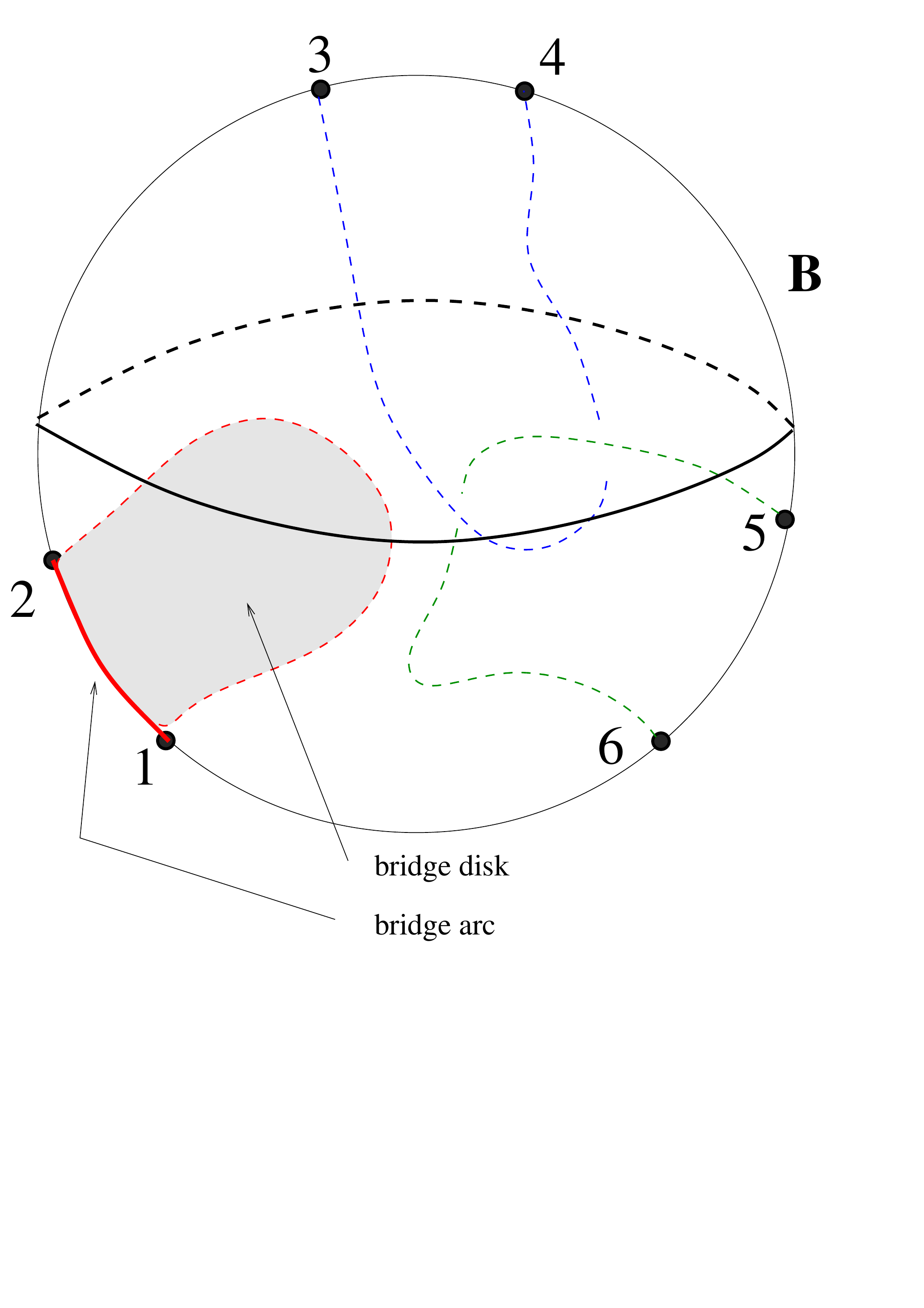}
\vskip -100pt
\caption{ Bridge disk and bridge arc in a rational $3$-tangle}
\label{F1}
\end{figure}

 Let $\tau=\tau_1\cup\tau_2\cup\tau_3$ be a rational $3$-tangle in $B$, where $\tau_1,\tau_2,\tau_3$ are the three strings of the rational $3$-tangle.  A  disk $D$ in $B$ is called a \textit{bridge disk} if $ $int $D\cap \tau=\emptyset$ and $\partial D=\tau_i\cup\beta$ and $\tau_i\cap \beta=\partial \tau_i=\partial \beta$ for some $i\in\{1,2,3\}$, where $\beta$ is a simple arc in $\partial B$ such that int $\beta\cap \tau=\emptyset$. The simple arc $\beta$ in $\Sigma_{0,6}$ is called a \textit{bridge arc} if  it cobounds a bridge disk with $\tau_i$ for some $i\in\{1,2,3\}$. We note that there is a collection of  disjoint three bridge arcs for a rational $3$-tangle since there are disjoint non-parallel three bridge disks in $B$ by definition of the rationality. However, we also note that the collection is not unique (up to isotopy) even for the same rational $3$-tangle. Once we select disjoint two bridge arcs on $\Sigma_{0,6}$, it is easy to find a third bridge arc. We note that any simple arc connecting the two remaining punctures which is disjoint with the existing two bridge arcs should be a bridge arc by Lemma~\ref{L1} below. So, there are infinitely many different bridge arcs for the same string of the rational $3$-tangle. We say that a simple closed curve $\gamma$ is \textit{obtained from} the bridge arc $\beta$ if the boundary of a small regular neighborhood of $\beta$ in $\Sigma_{0,6}$ is isotopic to $\gamma$.
 
\begin{Lem}\label{L1}
Let $\beta_1$ and $\beta_2$ be two disjoint  bridge arcs on $\Sigma_{0,6}$ of $(B,\tau)$, where $\tau$ is a rational $3$-tangle in $B$. Let $\beta_3$ be an arc connecting the two remaining punctures on $\Sigma_{0,6}$ which is disjoint with $\beta_1\cup\beta_2$. Then $\beta_3$ is also a bridge arc on  $\Sigma_{0,6}$ of $(B,\tau)$.
\end{Lem}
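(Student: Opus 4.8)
The plan is to translate the statement about bridge arcs into one about compressing disks of the tangle complement, and then to build the required disk by tubing. Write $H=\overline{B\setminus N(\tau)}$, which is a genus-$3$ handlebody because $\tau$ is rational. For a bridge arc $\beta$, let $\gamma=\partial N(\beta)\subset\Sigma_{0,6}$ be the associated simple closed curve, as in the definition above. The key reduction I would establish is the equivalence: \emph{$\beta$ is a bridge arc of $(B,\tau)$ if and only if $\gamma$ bounds a compressing disk of $H$.} For the forward direction, the frontier of a regular neighborhood $N(E)$ of a bridge disk $E$ with $\partial E=\tau_i\cup\beta$, intersected with $H$, is a disk bounded by $\gamma$ whose interior is disjoint from $\tau$. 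For the converse, if $\gamma$ bounds a disk $\Delta$ in $H$, then $\Delta$ together with the twice-punctured disk $N(\beta)\subset\partial B$ bounds a ball $B'$ containing $\tau_i$; since $\partial\Delta=\gamma$ is essential on the torus $\partial(B'\setminus N(\tau_i))$, compressing along $\Delta$ yields a sphere bounding a ball, so $B'\setminus N(\tau_i)$ is a solid torus. Hence $\tau_i$ is boundary parallel along $\beta$, i.e. $\beta$ is a bridge arc.

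Granting this equivalence, the problem reduces to showing that $\gamma_3=\partial N(\beta_3)$ bounds a disk in $H$. Applying the equivalence to the hypotheses, the curves $\gamma_1=\partial N(\beta_1)$ and $\gamma_2=\partial N(\beta_2)$ bound compressing disks $\Delta_1,\Delta_2$ of $H$; their boundaries are disjoint since $\beta_1\cap\beta_2=\emptyset$, so an innermost-circle argument in the irreducible handlebody $H$ makes $\Delta_1$ and $\Delta_2$ disjoint. Because $\beta_1,\beta_2,\beta_3$ are disjoint embedded arcs, the three twice-punctured disks $N(\beta_1),N(\beta_2),N(\beta_3)$ are disjoint in the sphere $\partial B$, and their complement is a pair of pants $\Pi\subset\partial B$ with $\partial\Pi=\gamma_1\cup\gamma_2\cup\gamma_3$. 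Pushing $\Pi$ slightly into the interior of $H$ and capping off its $\gamma_1$- and $\gamma_2$-boundaries with $\Delta_1$ and $\Delta_2$ produces an embedded disk $\Delta_3=\Delta_1\cup\Pi\cup\Delta_2$ with $\partial\Delta_3=\gamma_3$ and interior disjoint from $\tau$.

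Thus $\gamma_3$ bounds a disk in $H$, and the converse direction of the equivalence shows that $\beta_3$ is a bridge arc for the remaining string, completing the proof. I expect the main obstacle to be precisely this converse direction: one must verify that an arbitrary compressing disk bounded by $\gamma_3$ — which may be complicated, reflecting the infinitely many non-isotopic admissible choices of $\beta_3$ noted in the introduction — genuinely certifies that $\tau_3$ is parallel to the prescribed arc $\beta_3$, and not merely that $\tau_3$ is unknotted. This is exactly where it matters that $\gamma_3$ is $\partial N(\beta_3)$, so that the solid-torus structure of $B'\setminus N(\tau_3)$ is forced to be compatible with $\beta_3$ itself. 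By contrast, the disjointness of $\Delta_1,\Delta_2$ and the transverse push-off of $\Pi$ are routine irreducibility arguments in $H$, and the pair-of-pants decomposition of $\partial B$ is automatic once the three bridge arcs are disjoint.
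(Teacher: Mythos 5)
Your route is genuinely different from the paper's (the paper cuts $B$ successively along bridge disks, reducing to a rational $2$-tangle and then a rational $1$-tangle, observes that in the $1$-tangle any arc joining the two endpoints and avoiding the cut disks cobounds a bridge disk, and glues back), and your tubing construction of $\Delta_3=\Delta_1\cup\Pi\cup\Delta_2$, as well as the forward direction of your equivalence, are sound. However, the converse direction of your key equivalence --- which you yourself identify as the crux --- fails as written. The curve $\gamma=\partial\Delta$ is \emph{not} essential on the torus $\partial(B'\setminus N(\tau_i))$: the disk $\Delta$ is itself a subsurface of that torus, so $\gamma$ bounds a disk lying \emph{on} the torus, and a push-in copy of $\Delta$ is not a compressing disk for it. Consequently ``compressing along $\Delta$'' does not compress the torus at all, and the solid-torus conclusion does not follow from the stated reasoning; a priori $B'\setminus N(\tau_i)$ could be the exterior of a knotted arc in the ball $B'$, in which case $\beta$ would not be a bridge arc.

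The gap is fixable, but only by genuinely invoking rationality at this point rather than a torus-compression argument. Since $\tau$ is rational, $H$ is a genus-$3$ handlebody, so $\pi_1(B\setminus\tau)$ is free of rank $3$. The sphere $\Delta\cup N(\beta)$ splits $B\setminus\tau$ along the disk $\Delta$, so by van Kampen $\pi_1(B'\setminus\tau_i)$ injects as a free factor of a free group, hence is free; since $H_1(B'\setminus\tau_i)\cong\mathbb{Z}$, it is infinite cyclic, and a compact orientable irreducible $3$-manifold with torus boundary and infinite cyclic fundamental group is a solid torus. From there $\tau_i$ is boundary-parallel in $B'$, and the issue you flagged --- that the parallelism must be to $\beta$ itself and not merely to some boundary arc --- is handled by the fact that any two embedded arcs on the sphere $\partial B'$ joining the endpoints of $\tau_i$ are isotopic rel endpoints, so the parallelism arc may be taken to be the core $\beta$ of $N(\beta)$ after pushing the isotopy off the interior disk $\Delta$ into $\partial B$. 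With the van Kampen argument substituted for the false essentiality claim, your proof goes through; also note that, like the paper, you implicitly use that $\beta_1$ and $\beta_2$ cobound disks with two \emph{distinct} strings, so that the two remaining punctures are the endpoints of a single string $\tau_3$ --- this is worth one explicit sentence.
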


 Let $\mathcal{B}=\{\beta_1,\beta_2,\beta_3\}$ be a collection of  disjoint three bridge arcs on $\Sigma_{0,6}$ for $T=(B,\tau)$ which is called an $\textit{arc system}$ or simply a \textit{system} of $T$. For a better argument about this, we want to make a basic formation in $\Sigma_{0,6}$. Let $E_i$ be the fixed 2-punctured disks in $\Sigma_{0,6}$ as in the diagram of Figure~\ref{c2}. Let $E=E_1\cup E_2\cup E_3$, $\partial E=\partial E_1\cup\partial E_2\cup\partial E_3$ and $P=\Sigma_{0,6}\setminus E$. \\
 
 \begin{figure}[htb]
 \includegraphics[scale=.3]{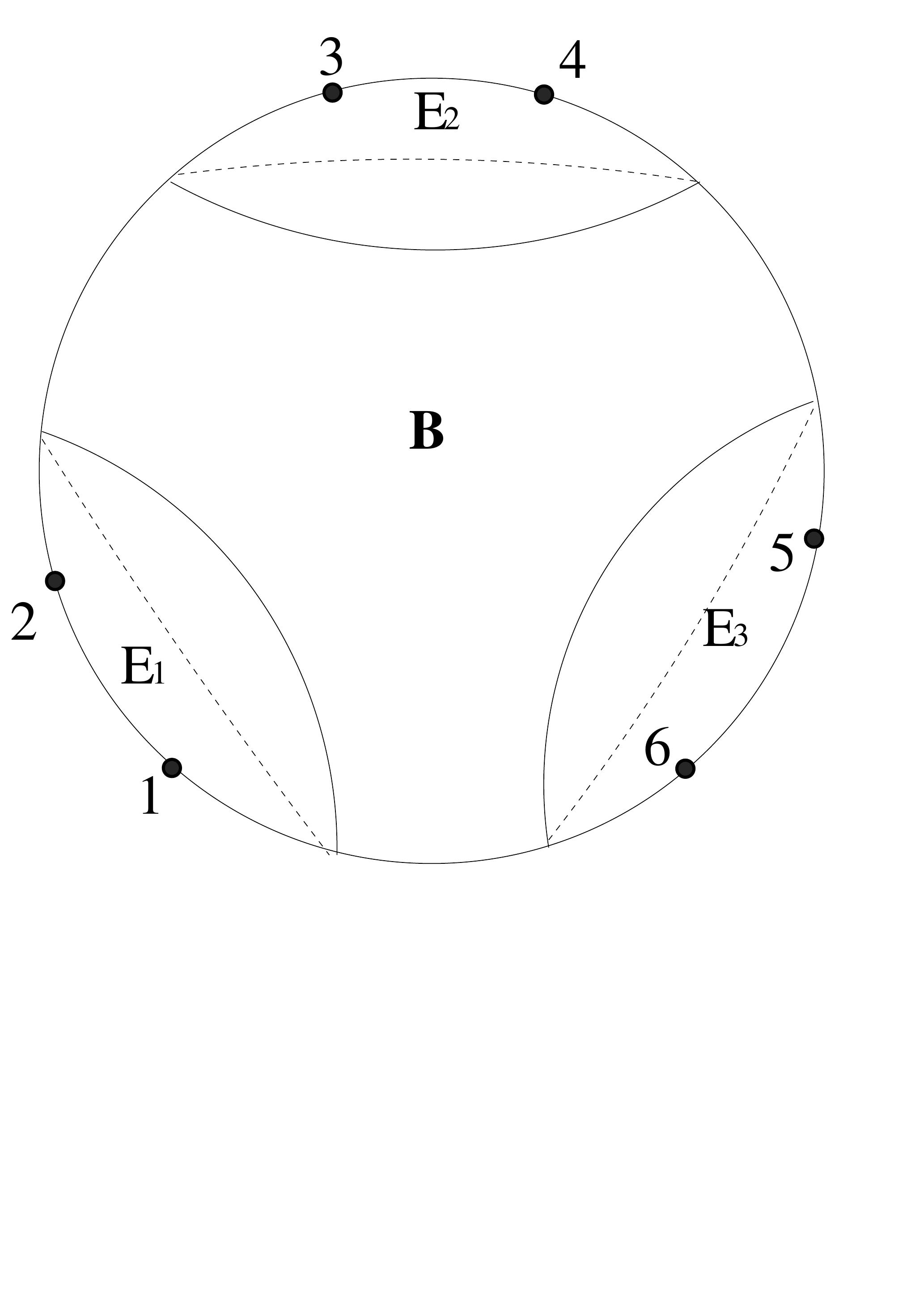}
 \vskip -80pt
 \caption{Basic formation}\label{c2}
 \end{figure}
 \subsection{Dehn's parameterization of an arc system of a rational $3$-tangle $T$}

Let $\mathcal{B}=\{\beta_1,\beta_2,\beta_3\}$ be an arc system of $T$. With a similar argument of \textit{Dehn's parameterization} of simple closed curves (Refer to~\cite{1}.), we can parameterize  $\beta=\beta_1\cup\beta_2\cup\beta_3$ as follows. Let $\omega_i$ be the subarc of $\partial E_i$ so that $\omega_i$ contains all of the intersection between $\partial E_i$ and $\beta$. They are called \textit{windows}. By considering the Dehn twists supported on $\partial E_i$, we can define the standard arcs in $P$ as in the diagrams of Figure~\ref{l10}. We note that all components of $P\cap \beta$ can be realized by giving the \textit{weights} of the standard arcs. The \textit{weight} of a standard arc stands for the number of parallel arcs to the  standard arc. 
\begin{figure}[htb]
\includegraphics[scale=.5]{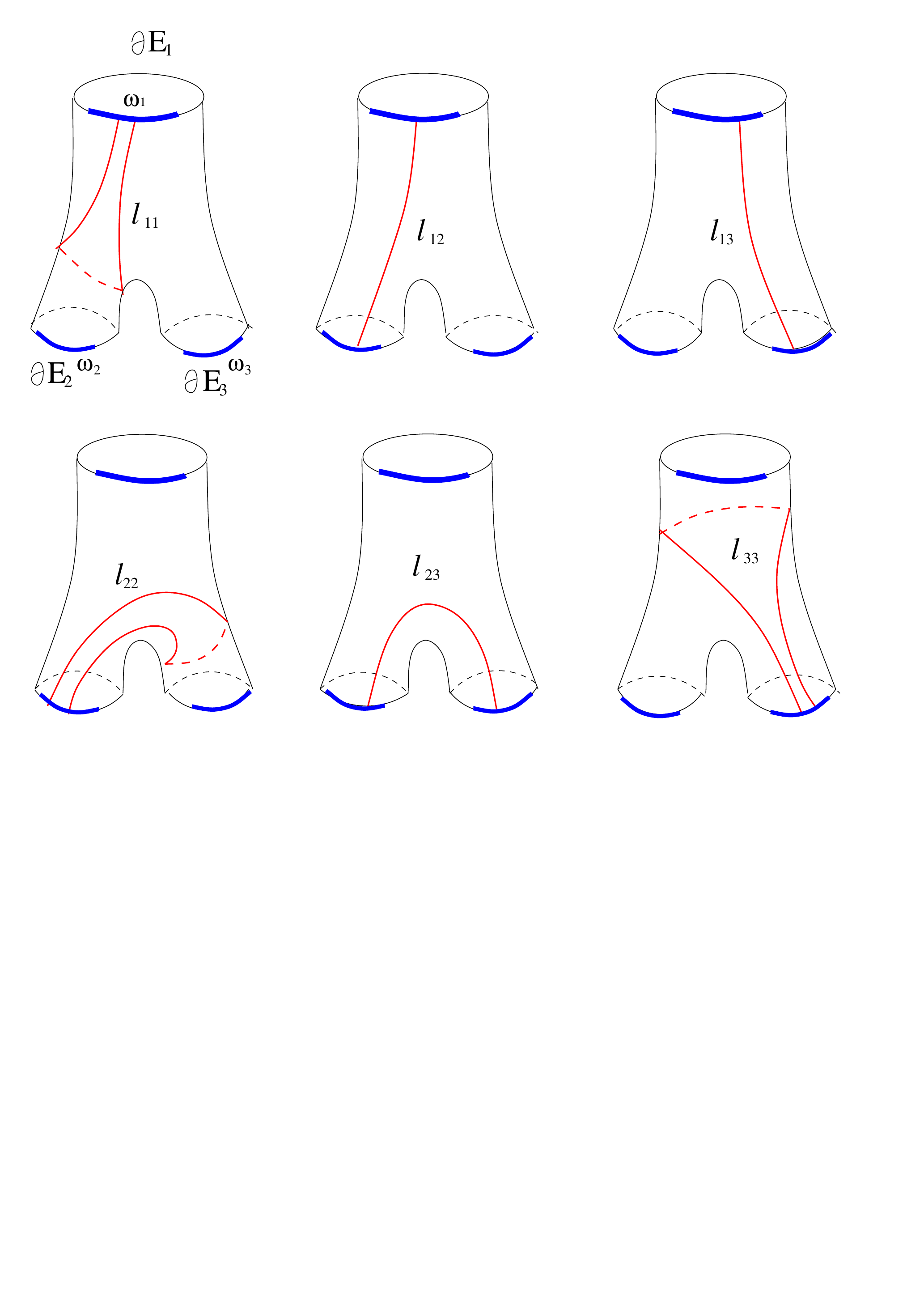}
\vskip -190pt
\caption{Standard arcs}\label{l10}
\end{figure}
Now, we investigate the arc types in $E_i$. For this, we define the innermost 2-punctured disk $E_i'$ so that  it contains only vertical intersections between $\beta$ and $E_i'$ as in the diagrams of Figure~\ref{c4}. We may need to isotope $\beta$ to have $E_i'$ so that it is in minimal general position with respect to $\partial E$ and $\partial E'$, where $\partial E'=\partial E_1'\cup\partial E_2'\cup\partial E_3'$.
 We may need to fix the shape of the half arcs in $E_i'$ as in the diagrams of Figure~\ref{c4}.
  
  \begin{figure}[htb]
\includegraphics[scale=.6]{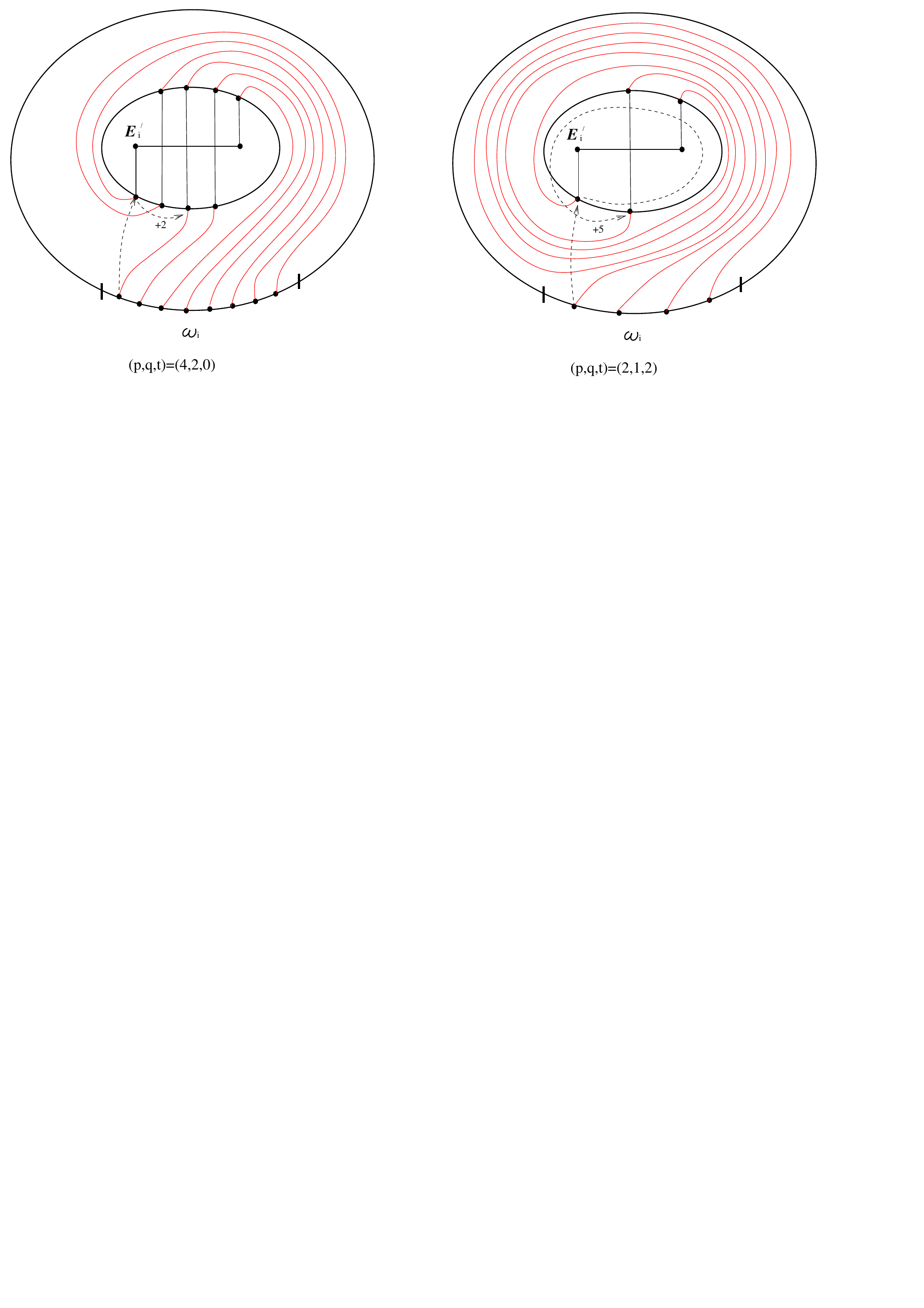}
\vskip -350pt
\caption{Arc types in $E_i$}\label{c4}
\end{figure}
We note that the two integers $p$ and $q$ determine  all the arc components of $\beta$ in $E_i$, where $p$ is the number of the arc components and $q$ gives us the connecting pattern between the $2p$ endpoints on $\omega_i$ and the endpoints on $\partial E_i'$. Refer to Figure~\ref{c4} to see the rule to determine $q$.  We note that one connecting pattern determines all the connecting patterns since  all the arc components should be disjoint each other. By adding up the all arguments above, we have the following theorem.

 \begin{Thm}[Special case II of Dehn's theorem]\label{T2} There is a one-to-one map $\phi :
 \mathcal{C}\rightarrow \mathbb{Z}^6$ so that $\phi([\beta_i])=(p_1,p_2,p_3, q_1, q_2, q_3)$, i.e., it classifies isotopy classes of  simple arcs (bridge arcs), where $\mathcal{C}$ is the collection of isotopy classes of simple arcs.
\end{Thm}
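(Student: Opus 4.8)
The plan is to establish that $\phi$ is both well-defined and injective by treating the subsurfaces $E=E_1\cup E_2\cup E_3$ and $P=\Sigma_{0,6}\setminus E$ separately, and then showing that the data on the two pieces glue together in only one way. First I would fix a representative $\beta=\beta_1\cup\beta_2\cup\beta_3$ of a given class in $\mathcal{C}$ and isotope it into minimal general position with respect to $\partial E$ and $\partial E'$, exactly as in the construction of the windows $\omega_i$ preceding the statement. For such a representative I would set $p_i$ to be the number of arc components of $\beta\cap E_i$ and read off $q_i$ from the connecting pattern in $E_i$ according to the rule of Figure~\ref{c4}. The first task is to check that $(p_i,q_i)$ are genuine isotopy invariants: since $p_i$ counts the arcs crossing into the innermost disk $E_i'$, it equals a minimal (geometric) intersection number and is therefore unchanged under isotopy, and once $p_i$ is fixed the observation recorded before the theorem --- that one connecting pattern determines all of them by disjointness --- shows that $q_i$ is a single well-defined integer.

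The key structural step is the rigidity of arcs in $P$. Because each $E_i$ is a $2$-punctured disk containing two of the six punctures, the complement $P$ is a sphere with three boundary circles and no punctures, i.e. a pair of pants. In a pair of pants, any collection of pairwise disjoint essential arcs with prescribed endpoints on the boundary is determined up to isotopy by the number of endpoints it has on each boundary component; there is no twisting parameter internal to $P$. Consequently the weights of the standard arcs of Figure~\ref{l10} are not free data but are functions of the endpoint counts on the windows, and these counts are in turn determined by $(p_1,p_2,p_3)$. This is precisely why the target is $\mathbb{Z}^6$ rather than a larger lattice: all of the freedom living in $P$ is already encoded by the three integers $p_i$.

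With these two ingredients, injectivity follows by reconstruction. Given a tuple in the image of $\phi$, the integers $p_i$ determine the endpoint distribution on the windows and hence, by the pair-of-pants rigidity, the arcs of $\beta\cap P$ up to isotopy; in each $E_i$ the pair $(p_i,q_i)$ determines the half-arcs in $E_i'$ and the connecting pattern across $\omega_i$, again up to isotopy, by the analysis summarized in Figure~\ref{c4}. Matching endpoints along each window $\omega_i$ then glues these two isotopy-determined pieces in the only possible way, so any two arc systems with equal image under $\phi$ are isotopic. Finally I would record the sign and range conventions ensuring the image lies in $\mathbb{Z}^6$: each $p_i$ is a nonnegative integer, while $q_i$ ranges over all of $\mathbb{Z}$ because the connecting pattern may be twisted by a Dehn twist along $\partial E_i$ in either direction.

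The step I expect to be the main obstacle is the simultaneous well-definedness of $q_i$ together with the handling of degenerate configurations. Pinning $q_i$ down as an honest invariant requires that minimal general position be unique up to an isotopy preserving the decomposition by $\partial E$ and $\partial E'$, so that the connecting pattern --- and not merely its existence --- is canonical; this is where the most careful argument is needed. Equally delicate are the boundary cases, for instance when some $p_i=0$ and the corresponding twisting is vacuous, or when an arc runs from a window back to the same $E_i$: in these situations one must fix a convention for $q_i$ so that $\phi$ remains single-valued into $\mathbb{Z}^6$ without collapsing distinct classes. Once these conventions are settled, the gluing argument above yields the full statement.
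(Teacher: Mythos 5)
Your proposal is correct and follows essentially the same route as the paper, whose justification of Theorem~\ref{T2} is exactly the discussion preceding it: decompose $\Sigma_{0,6}$ along $\partial E$, realize $\beta\cap P$ by weighted standard arcs, record $(p_i,q_i)$ in each $E_i$ (with the $q_i=0$ convention when $p_i=0$), use disjointness to see that one connecting pattern determines all, and defer the well-definedness of the parameterization to \cite{1}. Your explicit pair-of-pants counting step --- that the weights in $P$ are forced by the boundary counts coming from the $p_i$, which is why $\mathbb{Z}^6$ suffices --- makes precise something the paper leaves implicit; just phrase that rigidity as holding up to isotopies that let endpoints slide along $\partial E$ (rel fixed endpoints there are infinitely many classes), with the residual twisting absorbed into the $q_i$ via the standard-position convention, as your final paragraph indicates.
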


We note that  Theorem~\ref{T2} works for $\beta=\beta_1\cup\beta_2\cup\beta_3$ as well.
We say that an arc system of $T$ is in \textit{standard position} if every bridge arcs of the system are realized by taking the weights of the standard arcs and assigning integers for the connecting pattern in $E_i$. In other words, the bridge arcs in a system in standard position do not make a bigon with $\partial E$.
We want to point out that if $p_i=0$ then we are not able to decide a connecting pattern since there is no intersection between $\beta$ and $E_i$. In this case, we assign $0$ as $q_i$ for convenience.  
 We note that $\beta\cup \partial E$ is in minimal general position if $\beta$ is in standard position since the Dehn's parameterization is well defined.(Refer to \cite{1}.)

 \section{Normal forms of systems for a rational $3$-tangle $T$}

\subsection{Normal form and the bridge arc replacement}Let  $\mathcal{B}=\{\beta_1,\beta_2,\beta_3\}$ be an arc system of a rational $3$-tangle $T$. We assume that  $\beta=\beta_1\cup\beta_2\cup\beta_3$ is in standard position.  We say that an arc system $\mathcal{B}$ of $T$ is a \textit{normal form}  with respect to $\partial E$ if  there is no adjacent two intersections between $\beta$ and $\omega_i$ in $\omega_i$ which belong to the same $\beta_j$ for some $j\in\{1,2,3\}$.
 In order to show the existence of the normal form, we assume that there exist   more than or equal to two successive intersections not satisfying the definition of normal form as in Figure~\ref{F6}. We note that there are two subarcs of $\beta_j$ so that one end of them is one of the successive intersections   and the other end is one of the two endpoints of $\beta_j$ and they do not intersect  with the others among the successive intersections. Let $c_1$ and $c_2$ be the two subarcs of $\beta_j$. We define the two directions $(+)$ and $(-)$ as in the first diagram of Figure~\ref{F6}.  
Assume that $c_1$ and $c_2$ follow  $(+)$ and $(-)$ directions respectively as
 in the second diagram of Figure~\ref{F6}. Then we construct a new bridge arc with $c_1$ and $c_2$ so that it is disjoint from the other two bridge arcs not $\beta_j$ of the system as
 in the second diagram of Figure~\ref{F6}. Then, we take the bridge arc as one of the bridge arcs for a new arc system. It is clear that the new arc is also a bridge arc by Lemma~\ref{L1}.  If $c_1$ and $c_2$ follow only one of the  directions as in the third diagram, we construct a new bridge arc  with $c_1$ and $c_2$ so that  it is disjoint from the other two bridge arcs not $\beta_j$ of the system as in the fourth diagram of Figure~\ref{F6}. \begin{figure}[htb]
 \includegraphics[scale=.8]{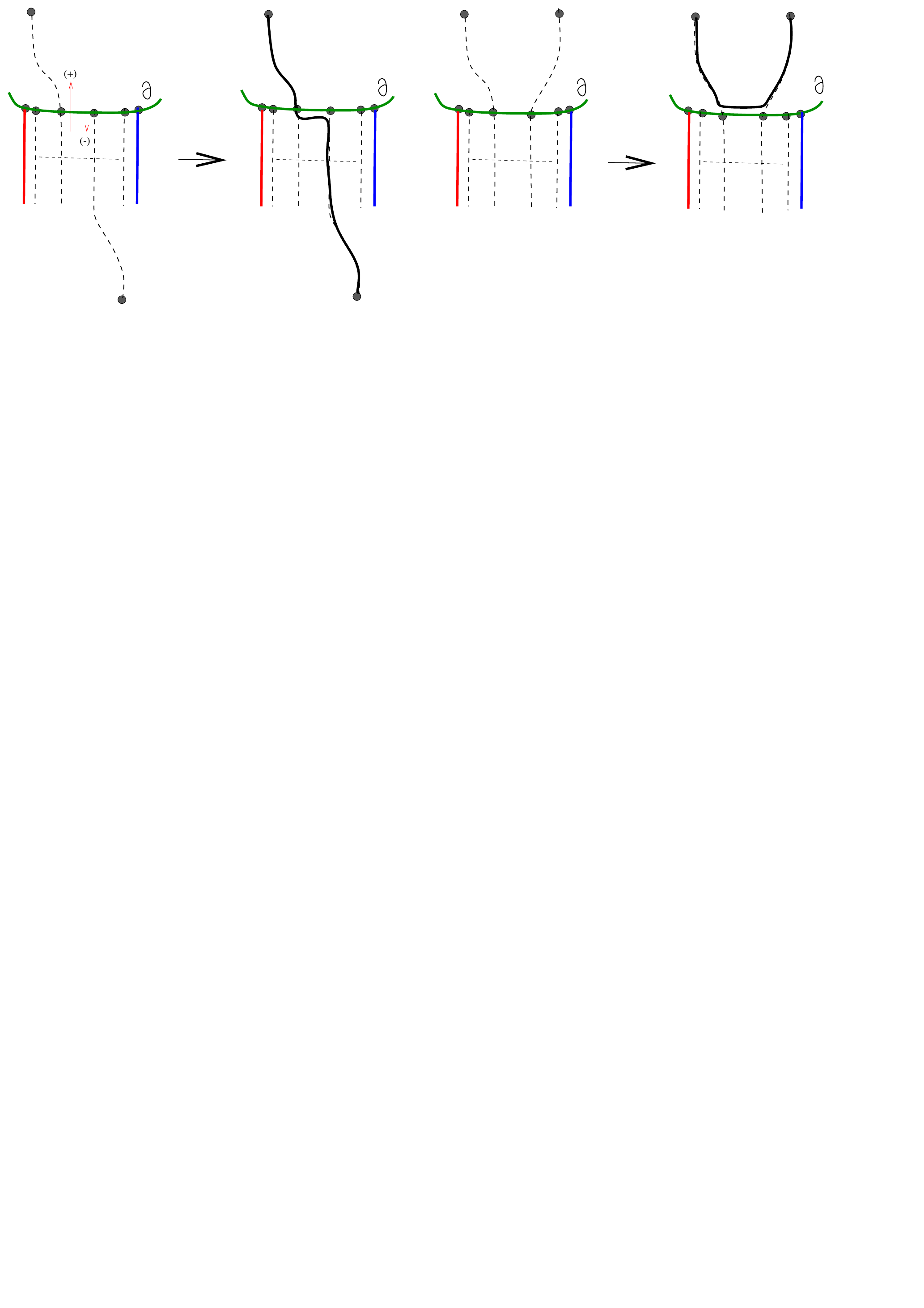} \vskip -500pt
  \caption{Bridge arc replacement, $
  \mathbf{BR}$}\label{F6}
 
 \end{figure}
 This procedure is called the \textit{bridge arc replacement}, briefly $\mathbf{BR}$ with respect to $\partial E$. We note that the new bridge arc obtained by $\textbf{BR}$ has less (geometric) intersections with $\partial  E$.

 \begin{Lem}
 Let $\mathcal{B}$ be an arc system of $T=(B,\tau)$. Then there exists a normal form $\mathcal{B}'$ of $T$ with respect to $\partial E$.
 \end{Lem}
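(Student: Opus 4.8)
The plan is to prove existence by an induction on a complexity that the bridge arc replacement $\mathbf{BR}$ is designed to decrease. Concretely, for an arc system $\mathcal{B}$ put in standard position I would set the complexity to be the total geometric intersection number $c(\mathcal{B}) = |\beta \cap \partial E|$, where $\beta = \beta_1 \cup \beta_2 \cup \beta_3$; by the discussion preceding the statement this is well defined and equals $\sum_i |\beta \cap \omega_i|$, a non-negative integer. The proof is then a ``complexity strictly decreases, so the process terminates'' argument, with the terminal system being the desired normal form.

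First I would dispose of the base case: if $\mathcal{B}$ is already a normal form there is nothing to do. Otherwise, by definition some window $\omega_i$ contains two adjacent intersection points of $\beta$ that belong to the same component $\beta_j$. This is exactly the configuration that triggers $\mathbf{BR}$ (Figure~\ref{F6}). Applying $\mathbf{BR}$ to this pair produces a new arc built from the two subarcs $c_1, c_2$ of $\beta_j$; I would replace $\beta_j$ by this arc and keep the other two arcs unchanged, obtaining a new triple $\mathcal{B}'$. Two things must be checked here, and both are already supplied: the new arc is disjoint from the other two bridge arcs (this is built into the $\mathbf{BR}$ construction, handling both the two-direction and one-direction cases of Figure~\ref{F6}), and it is again a bridge arc by Lemma~\ref{L1}. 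Hence $\mathcal{B}'$ is again an arc system of $T$.

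Next I would record that the complexity strictly drops. Since $\mathbf{BR}$ leaves $\beta_k$ ($k \neq j$) untouched and replaces $\beta_j$ by an arc with strictly fewer intersections with $\partial E$ (as noted just after the definition of $\mathbf{BR}$), we get $|\beta' \cap \partial E| < |\beta \cap \partial E|$ before re-standardizing. I would then isotope $\mathcal{B}'$ back into standard position by removing any bigons it makes with $\partial E$; since removing a bigon cannot increase the geometric intersection number, the standardized system still satisfies $c(\mathcal{B}') < c(\mathcal{B})$. Iterating, I obtain a strictly decreasing sequence of non-negative integers, which must stabilize after finitely many steps. The system reached at that point admits no triggering pair of adjacent intersections on any window---otherwise $\mathbf{BR}$ would apply and strictly decrease $c$ once more---so it is a normal form, which I take as $\mathcal{B}'$.

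The part requiring the most care is the bookkeeping that each $\mathbf{BR}$ genuinely lowers $c$ and yields a legitimate disjoint arc system rather than merely trading one bad window for another. Even though $\mathbf{BR}$ resolves one adjacent same-component pair, it may create new adjacencies elsewhere on $\partial E$; the monovariant $c$ is what rescues the argument, since it is the global geometric intersection count and its strict decrease is insensitive to where new adjacencies appear. I would therefore emphasize that the key input is the already-established fact that the replacement arc meets $\partial E$ in fewer points, together with the fact that re-standardization only removes intersections, so that termination---and hence the existence of a normal form---follows.
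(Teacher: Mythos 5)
Your proof is correct and follows the same strategy as the paper's (one-line) proof: the paper also argues that since $|\beta \cap \partial E|$ is finite and each $\mathbf{BR}$ strictly reduces it, a finite sequence of $\mathbf{BR}$s terminates in a normal form. Your version simply makes explicit the bookkeeping (the monovariant, Lemma~\ref{L1} guaranteeing the replacement is a bridge arc, and re-standardization not increasing intersections) that the paper leaves implicit.
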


 \begin{proof}
Since the number of intersections between $\partial P$ and $\beta$ is finite, we have a normal form by a sequence of \textbf{BR}s.
 \end{proof}

The first author \cite{2} proved the following theorem. The next corollary works by the theorem.

\begin{Thm}\label{T11}
Let $\mathcal{B}$ be a normal form of systems for the fixed trivial rational 3-tangle $T=(B,\epsilon)$ with respect to $\partial E$. Then $\mathcal{B}$ is unique up to isotopy. Especially, the three simple closed curves obtained from $\mathcal{B}$ respectively are isotopic to $\partial E_1,\partial E_2$ and $\partial E_3$ respectively.
\end{Thm}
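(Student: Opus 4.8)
The plan is to reduce the whole statement to a single geometric claim: \emph{a normal form for the trivial tangle is disjoint from $\partial E$.} First I record the standard system $\mathcal{B}_0=\{\beta_1^0,\beta_2^0,\beta_3^0\}$, where $\beta_i^0$ is the obvious arc inside $E_i$ joining the two punctures that $E_i$ carries; it is visibly a normal form (it meets $\partial E$ not at all, so the defining condition holds vacuously), and the simple closed curve obtained from $\beta_i^0$ is $\partial E_i$. Since any bridge arc for a string $\tau_i$ has its endpoints at $\partial\tau_i$, and for the basic formation the two endpoints of the $i$-th string of $\epsilon$ are exactly the two punctures lying in $E_i$, every bridge arc $\beta_i$ of any system joins the two punctures of $E_i$ (after relabelling). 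Consequently, if a normal form $\mathcal{B}$ satisfies $\beta\cap\partial E=\emptyset$, then each $\beta_i$ lies in the twice-punctured disk $E_i$; as the isotopy class of a simple arc joining the two punctures of a twice-punctured disk is unique, $\beta_i$ is isotopic to $\beta_i^0$ and the curve obtained from it is isotopic to $\partial E_i$. This gives both the uniqueness and the identification of the curves, so it remains to establish the claim.

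To prove the claim I argue by contradiction: assume $\beta$ is a normal form with $\beta\cap\partial E\neq\emptyset$. Using Lemma~\ref{L1} and the triviality of $T$, fix disjoint bridge disks $D_1,D_2,D_3$ with $\partial D_i=\tau_i\cup\beta_i$; these encode the fact that $\mathcal{B}$ is a genuine bridge system and not merely an arc system. Because $\beta$ is in standard position it makes no bigon with $\partial E$ (Theorem~\ref{T2} and the remark following it), so every component of $\beta\cap P$ is essential in the pair of pants $P$, and every component of $\beta\cap E_i$ that returns to $\omega_i$ must separate at least one puncture inside $E_i$. I then run an innermost-arc analysis inside each $E_i$. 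Consider an innermost returning arc $a$ of some $\beta_j$ with $j\neq i$ (so $a$ has both endpoints on $\omega_i$ and bounds a subdisk $\delta_a\subset E_i$ containing no returning arc of another bridge arc). Since $\delta_a$ is not an empty bigon, it contains a puncture of $E_i$, i.e. an endpoint of $\beta_i$; tracing the strand of $\beta_i$ out of that puncture and comparing the indices of the intersection points that $a$ and this strand leave on $\omega_i$ is designed to produce two \emph{adjacent} intersection points on $\omega_i$ belonging to the same bridge arc, contradicting the definition of a normal form. Eliminating the returning arcs of the $\beta_j$ ($j\neq i$) in this way forces each $\beta_j$ to miss $E_i$, and a parallel analysis removes the returning arcs of $\beta_i$ itself, so that $\beta_i\subset E_i$ for every $i$; this is exactly $\beta\cap\partial E=\emptyset$.

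The main obstacle is the combinatorial heart of the second paragraph: controlling how the innermost returning arc $a$ and the terminal strands of $\beta_i$ leave their endpoints on the window $\omega_i$, since a returning arc may enclose one or two punctures and the terminal strand of $\beta_i$ may itself cross $\omega_i$ several times, so one must argue carefully that some pair of consecutive points on $\omega_i$ carries the same index. I expect to organise this as an induction on $|\beta\cap\partial E|$, peeling off one innermost arc at a time and checking that the coloured cyclic word read along $\omega_i$ cannot avoid a repeat once a nontrivial returning arc is present; the disjoint bridge disks $D_i$ are used to guarantee that the only configurations surviving the normal-form constraint are the inessential ones, i.e. that no essential returning arc can persist for the trivial tangle. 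Finiteness of $|\beta\cap\partial E|$ (as in the proof that normal forms exist) makes the reduction terminate, and the uniqueness of the arc in a twice-punctured disk from the first paragraph then closes the proof.
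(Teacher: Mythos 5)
Note first that this paper contains no proof of Theorem~\ref{T11} to compare against: it is imported from the first author's preprint \cite{2}, so your proposal has to stand on its own, and on its own it has a genuine gap. Your first paragraph is sound: for the trivial tangle every bridge arc joins the two punctures of a single $E_i$ (since $\partial\beta_i=\partial\tau_i$), an embedded arc joining the two punctures of a twice-punctured disk is unique up to isotopy, and therefore the whole theorem is correctly reduced to your claim that a normal form for $\epsilon$ can be isotoped off $\partial E$. The gap is in the innermost-arc analysis, at exactly the spot you flag but do not resolve. Take an innermost returning arc $a\subset\beta_j$ in $E_i$ with subdisk $\delta_a$ (innermost among all returning arcs). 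If $\delta_a$ contains both punctures of $E_i$, or if $j=i$, innermostness forces the only crossings of $\omega_i$ between the endpoints of $a$ to be the strands of $\beta_i$ running to the enclosed punctures (possibly none, if $\beta_i\subset\delta_a$), and in every such subcase two adjacent intersections carry the same label, contradicting normality --- that part of your plan works. But when $j\neq i$ and $\delta_a$ contains exactly one puncture, exactly one strand crosses the window between the endpoints of $a$, namely the strand of $\beta_i$ ending at that puncture, and the word read along $\omega_i$ is $\beta_j\,\beta_i\,\beta_j$. This is perfectly consistent with the normal-form condition, which forbids only adjacent repetitions; indeed it is essentially the configuration exploited by the standard normal jump move (Figure~\ref{c14}), so it genuinely occurs in normal forms. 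Your innermost analysis therefore produces no contradiction in precisely the hard case.

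To exclude that configuration you would have to argue globally: the strand of $\beta_i$ separated from its other endpoint by $a$ must leave $E_i$, and its excursion can create further returning arcs in the other disks, so ``peeling off one innermost arc at a time'' has no evident terminating measure --- you are not free to delete $a$, since the system is given, not chosen. Moreover, you assert that the bridge disks $D_1,D_2,D_3$ ``are used to guarantee'' that only inessential configurations survive, but no step of your combinatorial argument ever consults them; since normality is a condition on curves in $\Sigma_{0,6}$ alone, identifying the mechanism by which triviality of the tangle (or at least the special endpoint pairing it forces) enters is exactly the content a proof must supply --- note that for other tangles normal forms with $|\beta\cap\partial E_i|\geq 2$ for all $i$ do occur (the paper's Case~2), so the conclusion is far from combinatorially automatic. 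As written, your second paragraph is a plausible research plan whose self-identified ``main obstacle'' coincides with the substance of the cited preprint, not a proof of it.
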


\begin{Cor}\label{T12}
Let $\mathcal{B}=\{\beta_1,\beta_2,\beta_3\}$ be a system of $T=(B,\tau)$, where $\tau$ is an arbitrary rational $3$-tangle. Let $\gamma_1,\gamma_2,\gamma_3$ be the simple closed curves obtained from $\beta_1,\beta_2$ and $\beta_3$ respectively so that they are pairwise disjoint. Let $\gamma=\gamma_1\cup\gamma_2\cup\gamma_3$. Then there exists unique normal form of systems for $T$ with respect to $\gamma$ (not with respect to $\partial E$) up to isotopy. Moreover, it is $\mathcal{B}$.
\end{Cor}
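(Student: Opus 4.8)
The plan is to split the statement into two parts: that $\mathcal{B}$ itself is a normal form with respect to $\gamma$ (which handles both existence and the ``moreover'' clause), and that any normal form with respect to $\gamma$ must coincide with $\mathcal{B}$ up to isotopy. The first part I would prove directly. Since each $\gamma_i$ is obtained from $\beta_i$, i.e.\ $\gamma_i$ is isotopic to the boundary of a small regular neighborhood of $\beta_i$ in $\Sigma_{0,6}$, and the arcs $\beta_1,\beta_2,\beta_3$ are pairwise disjoint, we may take the neighborhoods small enough that $\beta=\beta_1\cup\beta_2\cup\beta_3$ is disjoint from $\gamma=\gamma_1\cup\gamma_2\cup\gamma_3$, with each $\beta_i$ the core of the $2$-punctured disk bounded by $\gamma_i$. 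Reading the definition of normal form with $\gamma$ in place of $\partial E$, this exhibits $\mathcal{B}$ in standard position with respect to $\gamma$ and with no intersection at all between $\beta$ and any window on $\gamma_i$; hence the normal-form condition holds vacuously, and $\mathcal{B}$ is a normal form with respect to $\gamma$.

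For uniqueness, the plan is a change of coordinates carrying the situation to the trivial tangle, where Theorem~\ref{T11} applies. First I would produce a homeomorphism $H\colon (B,\tau)\to(B,\epsilon)$ with $H(\beta_i)$ equal to the core arc of $E_i$, so that $H(\gamma_i)$ is isotopic to $\partial E_i$ and $H(\gamma)=\partial E$ after a small isotopy. To build $H$, I would invoke rationality to choose three disjoint bridge disks $D_1,D_2,D_3$ with $\partial D_i=\beta_i\cup\tau_i$ realizing the given system $\mathcal{B}$ (the disjointness is exactly what the argument behind Lemma~\ref{L1} provides). Pushing each $\tau_i$ across $D_i$ simultaneously isotopes $\tau$ into $\partial B=\Sigma_{0,6}$ onto the spine $\{\beta_i\}$, so $(B,\tau)$ is the trivial tangle whose boundary spine is $\mathcal{B}$. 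A surface change of coordinates then gives a homeomorphism $h\colon\Sigma_{0,6}\to\Sigma_{0,6}$ with $h(\beta_i)$ the core of $E_i$ (any two systems of three disjoint arcs pairing the six punctures are related by such an $h$, since the complementary planar surfaces agree); extending $h\times\mathrm{id}$ over the collar in which the tangle has been pushed in, and then over the remaining tangle-free ball, yields the desired $H$.

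With $H$ in hand I would transport the problem. Because $H$ is a homeomorphism of pairs, it sends bridge arcs of $\tau$ to bridge arcs of $\epsilon$ and preserves disjointness; because $H(\gamma)=\partial E$, it preserves standard position and the combinatorial normal-form condition (no two adjacent intersections on a window belonging to the same arc). Hence $\mathcal{B}'\mapsto H(\mathcal{B}')$ is a bijection between normal forms of $(B,\tau)$ with respect to $\gamma$ and normal forms of $(B,\epsilon)$ with respect to $\partial E$. By Theorem~\ref{T11} the latter is unique up to isotopy and its curves are $\partial E_1,\partial E_2,\partial E_3$, i.e.\ it is the standard system $H(\mathcal{B})$. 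Applying $H^{-1}$, any normal form of $(B,\tau)$ with respect to $\gamma$ is isotopic to $\mathcal{B}$, which together with the first part proves existence, uniqueness, and that the normal form is $\mathcal{B}$.

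I expect the main obstacle to be the construction of $H$ in the second paragraph: one must normalize the tangle and the reference multicurve $\gamma$ at the same time, so the disjoint bridge disks have to be chosen to realize the prescribed arcs $\beta_i$, and the boundary change of coordinates must be extended over $B$ compatibly with the trivialized tangle. Verifying that ``standard position'' and the normal-form condition really do transport under $H$, so that Theorem~\ref{T11} is genuinely applicable to $H(\mathcal{B}')$, is the other point requiring care, though it should be routine once $H$ is built.
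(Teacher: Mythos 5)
Your proposal is correct and takes essentially the same route the paper intends: the paper offers no written proof, saying only that the corollary ``works by'' Theorem~\ref{T11}, and your change-of-coordinates homeomorphism $H$ carrying $(B,\tau)$ with the multicurve $\gamma$ to the trivial tangle $(B,\epsilon)$ with $\partial E$ is exactly the implicit reduction. Your supporting details are also sound --- the vacuous verification that $\mathcal{B}$ is a normal form with respect to $\gamma$ (the case $p_i=0$ the paper explicitly allows), and the existence of pairwise disjoint bridge disks via the cut-and-reglue argument behind Lemma~\ref{L1} --- so the transported uniqueness from Theorem~\ref{T11} closes the proof as required.
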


\subsection{Normal form obtained from Normal jump moves} For a given arc system $\mathcal{B}=\{\beta_1,\beta_2,\beta_3\}$ for $T$,  we define a \textit{jump move} as follows. Take a regular neighborhood of $\beta_i$, named $N(\beta_i)$, in $\partial B$ which is disjoint with $\beta_j$ and $\beta_k$, where $\{i,j,k\}=\{1,2,3\}$.
Now, we consider a rectangle $R$ so that the interior of $R$ is disjoint with $N(\beta_i), \beta_j$ and $\beta_k$ and two parallel sides of $R$ are subarcs of $N(\beta_i)$ and $\beta_j$ respectively. Then we can construct a new bridge arc by a modified band sum as in the diagrams of Figure~\ref{c5}. Then the movement to have the new bridge arc  is called a \textit{jump move} (over $\beta_i$). We note that the bridge arc obtained by a bridge arc replacement also can be obtained by a jump move.

\begin{figure}[htb]
\includegraphics[scale=.5]{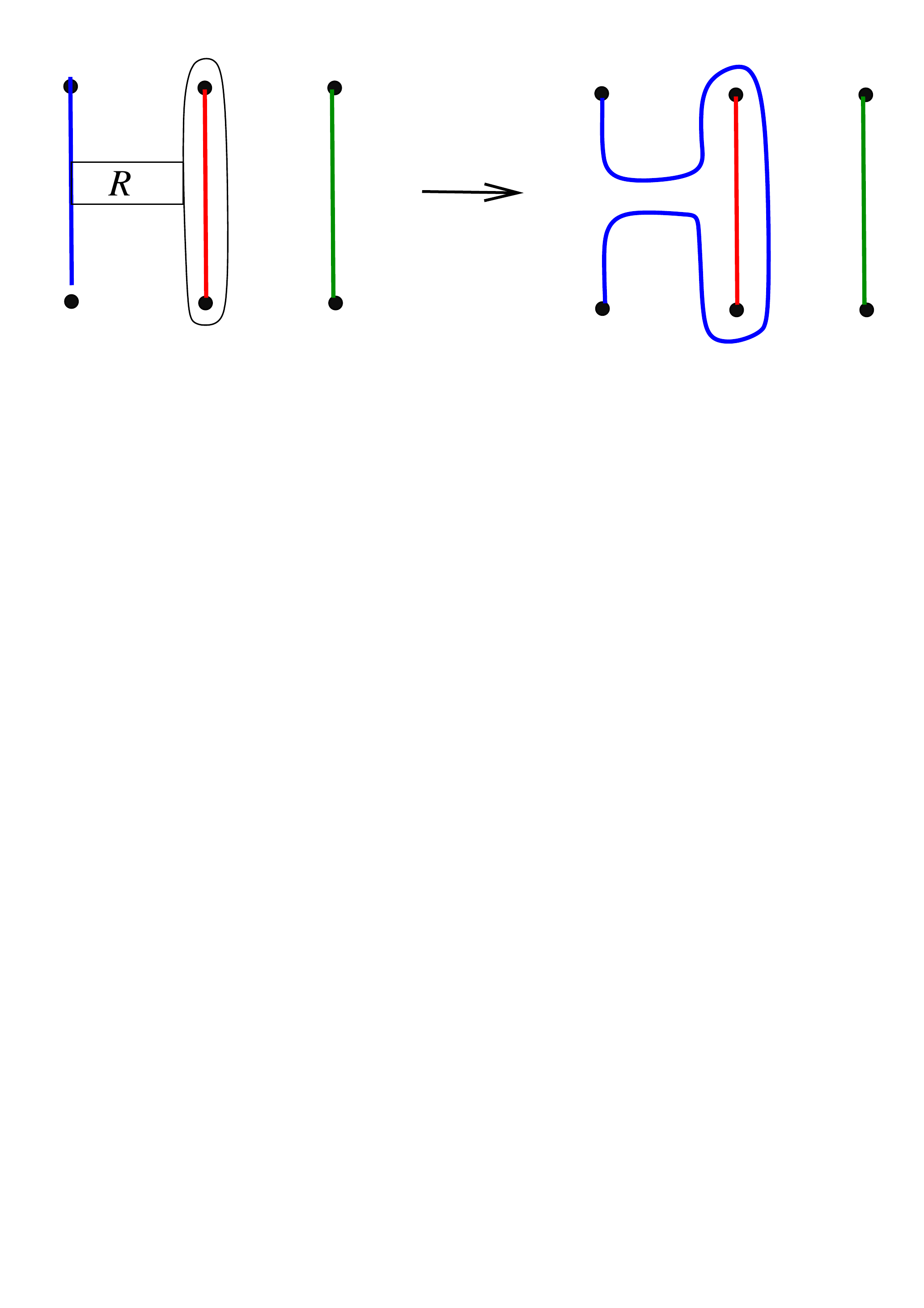}
\vskip -300pt
\caption{}\label{c5}
\end{figure}

Suppose that $\mathcal{B}$ is a normal form of systems for $T$ with respect to $\partial E$. If the new arc system $\mathcal{B}'$  replacing one of the bridge arcs by a jump move is also a normal form then the jump move is called a \textit{normal jump move}.
Now, we discuss how to find  a new normal form $\{\beta_1',\beta_2,\beta_3\}$ from the normal form $\mathcal{B}=\{\beta_1,\beta_2,\beta_3\}$ for $T$ with respect to  $\partial E$ by using a normal jump move.\\

 \textbf{Case 1}: $E_i$ contains one of the bridge arcs of the normal form; the following theorem gives an information to know the given rational $3$-tangle $T$.

\begin{Thm}
Suppose that $\mathcal{B}=\{\beta_1,\beta_2,\beta_3\}$ and $\mathcal{B}'=\{\beta_1',\beta_2',\beta_3'\}$ are  normal forms for the two rational $3$-tangles $T$ and $T'$ respectively with respect to $\partial E$. Let $(0,0,p_2,q_2,p_3, q_3)$ and $(0,0,p_2',q_2',p_3', q_3')$ be the ordered sequences of Dehn's parameters  of $\mathcal{B}$ and $\mathcal{B}'$ respectively.
 Then $T$ and $T'$ are isotopic if and only if $p_i=p_i'$ for $i=2,3$ and $q_2-q_3=q_2'-q_3'$.
\end{Thm}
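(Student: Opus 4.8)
The plan is to exploit the geometric meaning of the hypothesis $p_1=q_1=0$: it forces $\beta_1$ to be the core arc of $E_1$ and $\beta_2\cup\beta_3$ to avoid $E_1$ entirely, so that the string $\tau_1$ is a boundary-parallel (trivial) strand. Capping off $\tau_1$ then exhibits $\tau_2\cup\tau_3$ as a rational $2$-tangle $T_{23}$ carried by the two arcs $\beta_2,\beta_3$ in the $4$-punctured disk $\Sigma_{0,6}\setminus E_1$ (whose boundary is $\partial E_1$). The whole theorem will be read off from Conway's classification of rational $2$-tangles, once the triple $(p_2,p_3,q_2-q_3)$ is matched to the Conway invariant of $T_{23}$.

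For the ``if'' direction I would first isolate the single tangle self-equivalence responsible for the diagonal ambiguity in $(q_2,q_3)$. Working in the pair of pants $P=\Sigma_{0,6}\setminus E$, the boundary Dehn twist $T_{\partial E_1}$ winds an arc running from the window $\omega_2$ to the window $\omega_3$ once around $\partial E_1$; because $\partial E_1$ caps off the trivial strand $\tau_1$, the corresponding disk twist of $B$ is isotopic to the identity, so $T_{\partial E_1}$ is a \emph{tangle equivalence} of $(B,\tau)$ with itself. I would check that this move is exactly a jump move over $\beta_1$, that it keeps the system in normal form, and that when the resulting arc is pulled back into standard position the added winding is split equally between the two windows, giving the action $(p_2,p_3,q_2,q_3)\mapsto(p_2,p_3,q_2+c,q_3+c)$ for a fixed $c$. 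Consequently, whenever $p_i=p_i'$ and $q_2-q_3=q_2'-q_3'$, a suitable power of this normal jump move carries $\mathcal{B}$ to a normal form with the \emph{same} six Dehn parameters as $\mathcal{B}'$; injectivity of $\phi$ in Theorem~\ref{T2} identifies the two systems up to isotopy, and since every step was a tangle equivalence, $T$ and $T'$ are isotopic.

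For the ``only if'' direction I would run the reduction in reverse. An isotopy of $T$ onto $T'$ may be taken to respect the distinguished trivial strands (both are the core of the same $E_1$ for the fixed basic formation), hence to fix $E_1$ and induce an equivalence of the capped $2$-tangles $T_{23}$ and $T_{23}'$. By Conway's theorem these share one rational invariant, so it remains to express that invariant through the Dehn data: tracking $\beta_2\cup\beta_3$ across the pair of pants shows the Conway fraction depends only on $p_2,p_3$ and the difference $q_2-q_3$, the individual values of $q_2,q_3$ being exactly the $T_{\partial E_1}$-orbit coordinate killed above. Equating fractions then yields $p_i=p_i'$ for $i=2,3$ and $q_2-q_3=q_2'-q_3'$.

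The main obstacle is the bookkeeping hidden in the phrase ``split equally between the two windows'': I must verify in standard position that one winding around $\partial E_1$ contributes the \emph{same} integer to $q_2$ and to $q_3$, so that $q_2-q_3$ (and not some other combination) is the invariant, while leaving $p_2,p_3$ untouched, and dually that the Conway fraction of $T_{23}$ is recovered from precisely this data. This is a careful but elementary normal-position computation in $P$, organized by the standard-arc weights of Figure~\ref{l10}; once it is in hand, both implications follow formally from Theorem~\ref{T2} and Conway's classification.
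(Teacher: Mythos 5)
Your proposal is correct and follows essentially the same route as the paper: the hypothesis $p_1=q_1=0$ reduces $T$ to the trivial strand $\tau_1$ plus a rational $2$-tangle carried by $\beta_2,\beta_3$, and your boundary twist $T_{\partial E_1}$ is precisely the paper's sequence of jump moves over $\beta_1$ (Figure~\ref{c10}) realizing the diagonal action $(q_2,q_3)\mapsto(q_2+c,q_3+c)$, after which both arguments invoke the classification of rational $2$-tangles by the pair $(p,\,q_2-q_3)$. Note that the paper is even terser than you are on your flagged ``main obstacle'' -- it simply asserts $q=q_2-q_3=q_2'-q_3'$ without carrying out the window bookkeeping -- so your sketch, if anything, identifies more explicitly what needs checking.
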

\begin{figure}[htb]
\includegraphics[scale=.8]{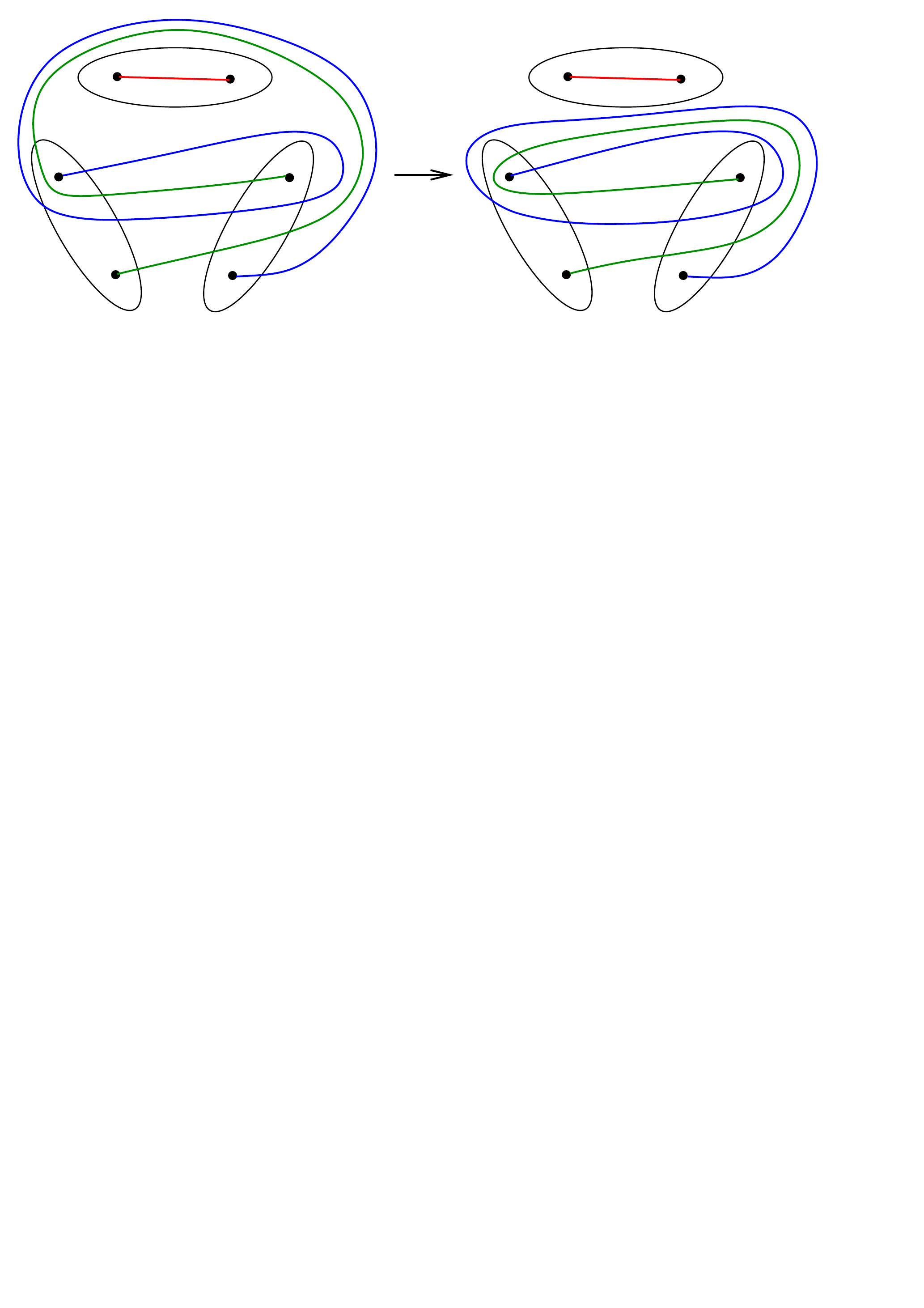}
\vskip -520pt
\caption{}\label{c10}
\end{figure}

\begin{proof}

Let $\beta=\beta_1\cup\beta_2\cup\beta_3$ and  $\beta'=\beta_1'\cup\beta_2'\cup\beta_3'$. Since $p_1=p_1'=0$,  $E_1$ contains one of the bridge arcs of $\mathcal{B}$ and $\mathcal{B}'$. Let $\beta_1$ and $\beta_1'$ be the bridge arcs of each normal form. We note that they are isotopic.
 Both of $\mathcal{B}$ and $\mathcal{B}'$ can have new normal forms obtained from  sequence of jump moves over $\beta_1$ as in the diagrams of Figure~\ref{c10}.  We note that there is no change of $p_i$ and $p_i'$ for $i=2,3$ after the sequence of  jump moves.
The right diagram of Figure~\ref{c10} has two parts; $\beta_1$ and the  two bridge arcs representing a rational $2$-tangles. First, we assume that $T$ and $T'$ are isotopic. Then, the rational $2$-tangles represented by the pairs of two bridge arcs are isotopic. We note that a  rational $2$-tangle is determined by $p$ and $q$ uniquely, where $p$ is the minimal intersection number between the two bridge arcs and $E_2$(or $E_3$) and $q$ represents the connecting pattern of endpoints between $\omega_2$ and $\omega_3$. Actually, we note that $q=q_2-q_3=q_2'-q_3'$. For the opposite direction, we note that if $q_2-q_3=q_2'-q_3'$ then the two corresponding rational $2$-tangles are isotopic. Therefore, $T$ and $T'$ are isotopic. This completes the proof.
 
\end{proof}

\textbf{Case 2}: A rational 3-tangle $T$ has a normal form $\mathcal{B}=\{\beta_1,\beta_2,\beta_3\}$ with respect to $\partial E$ satisfying the condition that $|\beta\cap \partial E_i|\geq 2$ for all $i=1,2,3$, where $\beta=\beta_1\cup\beta_2\cup\beta_3$.\\

 In this case, we use the normal jump moves to find an optimized normal form. We first define the \textit{standard normal jump move}. We show that the standard normal jump move is the only normal jump move in Theorem~\ref{L5}.\\
  \begin{figure}[htb]
 \includegraphics[scale=.9]{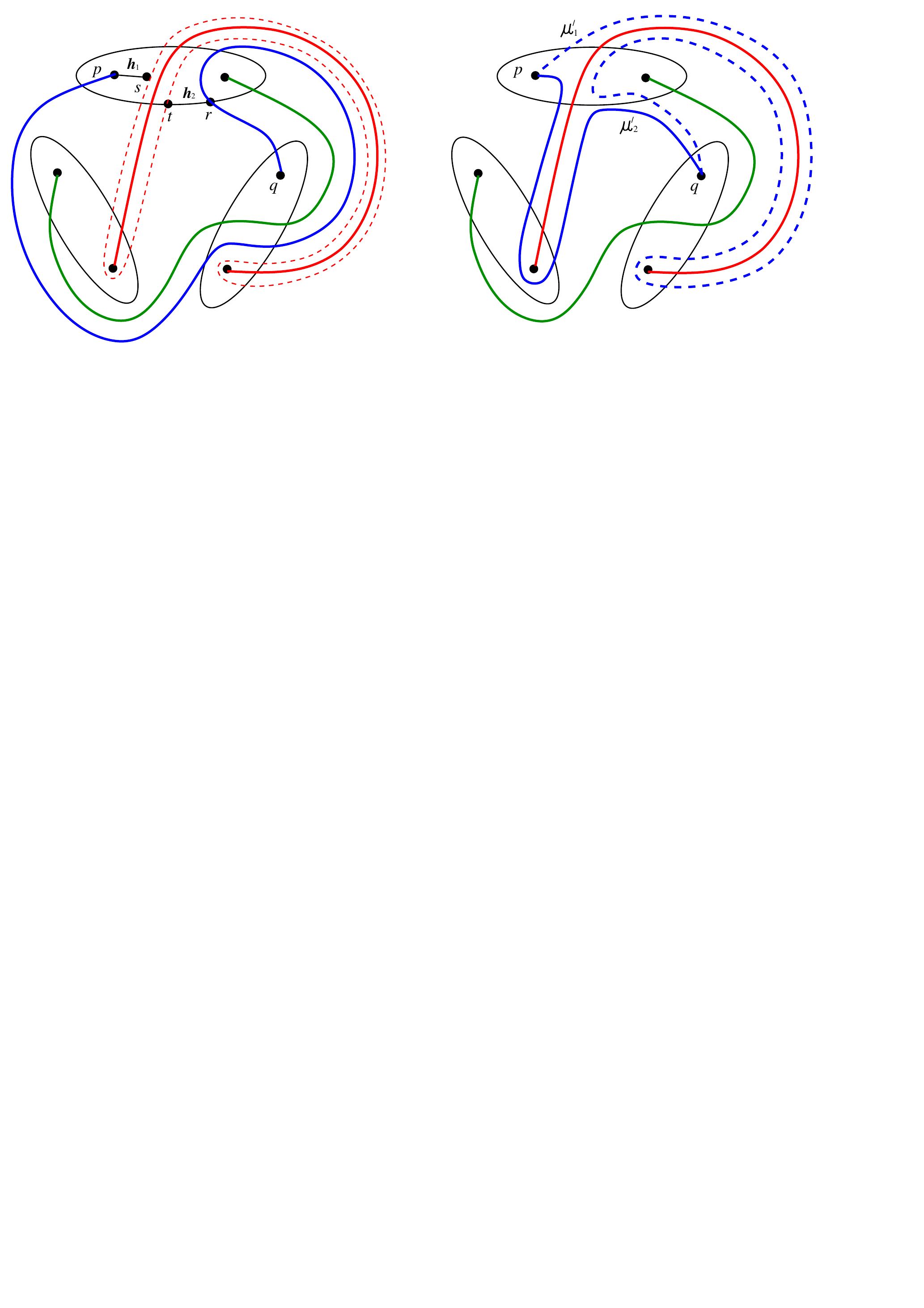}
 \vskip -550pt
 \caption{}\label{c14}
 \end{figure}
 
$\circ$ \textit{Standard normal jump move} : Let $p$ be one of the endpoints of $\beta_1$ which is in $E_i$. Then either $\beta_2$ or $\beta_3$ is adjacent to $p$ in $E_i$ since $\mathcal{B}$ is a normal form and $|\beta\cap\partial E_i|\geq 2$. Without loss of generality, we assume that $\beta_2$ is the  bridge arc adjacent to $p$.
Now, we take a small regular neighbhorhood of $\beta_2$, $N(\beta_2)$, so that it does not intersect with $\beta_1$ and $\beta_3$  as in the  first diagram of Figure~\ref{c14}. We now take a shortest path $h_1$ from $p$ to a point $s$ of $\partial N(\beta_2)$ in $E_i$ as in the first diagram of Figure~\ref{c14}. Since $\beta_2$ is the adjacent bridge arc to $p$, $h_1$ does not meet with $\beta$. We note that $\beta_1$ has at least one  intersection  with $\partial E$ so that it has adjacent intersection in $\partial E$ which belongs to $\beta_2$ since $\beta_2$ is the adjacent bridge arc to $p$. Let $r$ be the first intersection of them when we follow $\beta_1$ from $q$. Let $t$ be an  intersection between $\partial N(\beta_2)$ and $\partial E$ so that the subarc of $\partial E$ connecting $r$ and $t$ does not intersect $\beta_2\cup\beta_3$. Let $h_2$ be the subarc of $\partial E$. Let $\widetilde{h_2}$ be the path from $q$ to $t$ which is the union of the subarc of $\beta_1$ between $q$ and $r$, and $h_2$.
  There are two  arc components $\beta_{21}$ and $\beta_{22}$ of $\partial N(\beta_2)\setminus\{s,t\}$ from $s$ to $t$.  Let $\mu_1=h_1\cup\beta_{21}\cup\widetilde{h_2}$ and  $\mu_2=h_1\cup\beta_{22}\cup\widetilde{h_2}$. We note that $\mu_1$ and $\mu_2$ are disjoint with $\beta_2\cup\beta_3$. Moreover, we can isotope $\mu_1,\mu_2$ so that they are disjoint except the two endpoints and they meet $\beta_1$ only at the endpoints. Let $\mu_1'$ and $\mu_2'$ be the isotoped simple arc from $\mu_1$ and $\mu_2$ respectively. Since $\mu_1'\cup\mu_2'$ separates $\beta_2$ and $\beta_3$, $\beta_1$ should be isotopic to one of $\mu_i'$. Without loss of generality, suppose that $\mu_1'$ is isotopic to $\beta_1$. We note that $\mu_2$ is obtained from $\mu_1$ by a jump move over $\beta_2$.\\
  
This jump move is a normal jump move called \textit{standard normal jump move} of $\beta_1$. 
 We isotope $\mu_2'$ so that it  is in minimal general position with respect to $\partial E$.  We claim that it is a normal jump move.\\
 
\begin{proof}[Proof of the claim] :  Take an intersection $a$ between $\beta_1$ and $\partial E_j$. Since $\{\beta_1,\beta_2,\beta_3\}$ is a normal form with respect to $\partial E$, there are three cases for the two adjacent intersections to $a$ as in the diagram $(a-1), (a-2)$ and $(a-3)$ of Figure~\ref{c6}. In $(a-1)$, there are two dotted red lines which are adjacent to $a$. We note that one of them would be disappeared when we take $\mu_2'$ since one of them is a part of $\mu_1'$ which is isotopic to $\beta_1$. In $(a-2)$, there is no subarc of $\mu_2'$ between the green and red arcs since the red dotted arc between them should be a part of $\mu_1'$ which is isotopic to $\beta_1$. In $(a-3)$, if the blue arc between the green arcs is a part of $\widetilde{h_2}$ then we need to consider the two dotted blue arcs which are parts of $\mu_1'$ and $\mu_2'$ respectively. Clearly, only one of the dotted blue arcs is a part of $\mu_2'$. Now, consider the case that the green and the red arcs are adjacent as in the diagram $(b)$ of Figure~\ref{c6}. It does not matter where or not the dotted red arc is a part of $\mu_2'$ in this case to check the normality, but it should be. Finally, we conclude that $\{\mu_2',\beta_2,\beta_3\}$ is a normal form, so it is a normal jump move.

  \begin{figure}[htb]
 \includegraphics[scale=.8]{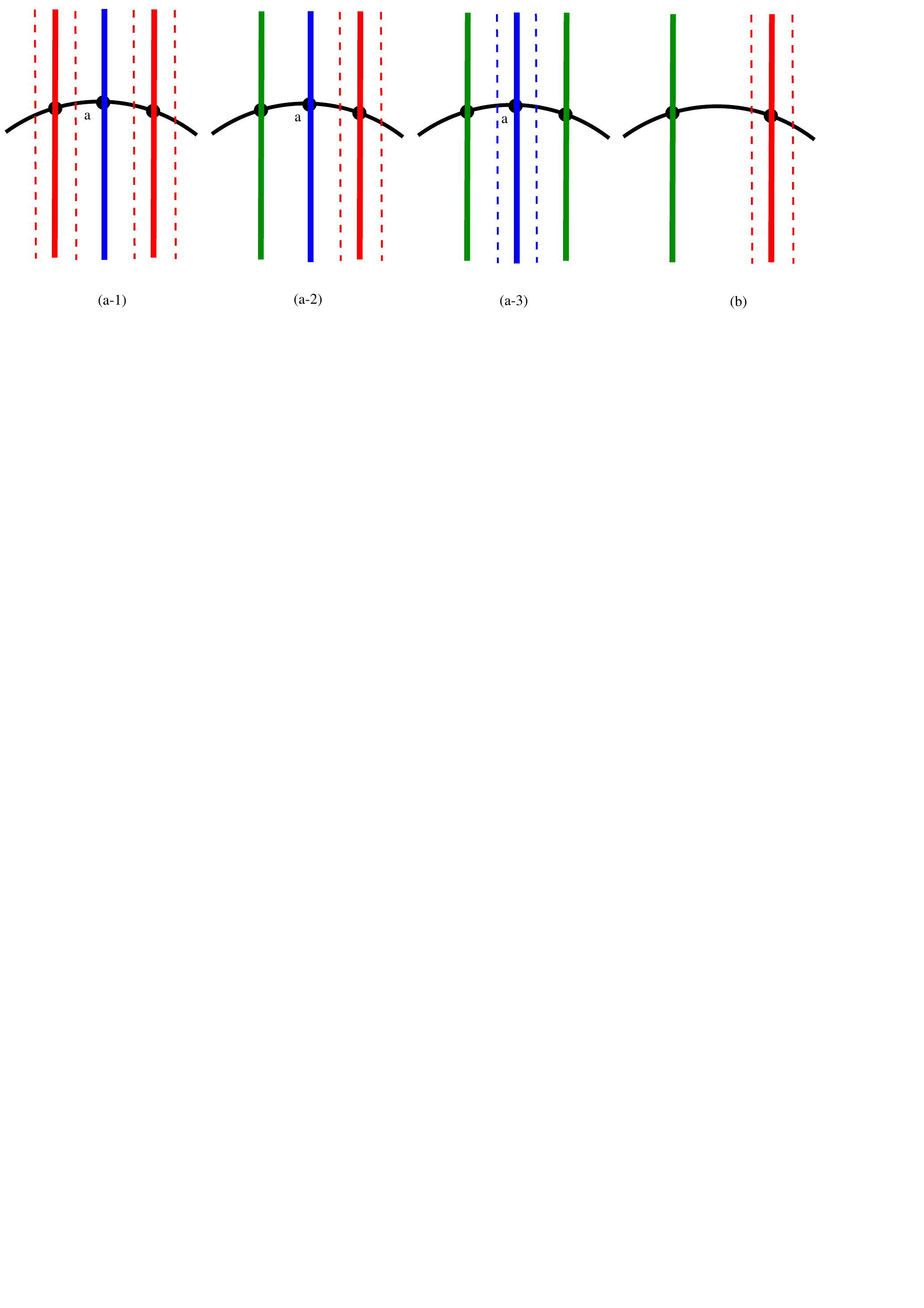}
 \vskip -520pt
 \caption{}\label{c6}
 \end{figure} 
 
 \end{proof}
 
  The following theorem shows the uniqueness of the normal jump move.

\begin{Thm}\label{L5}
Let $\mathcal{B}=\{\beta_1,\beta_2,\beta_3\}$ be a normal form of $T$ with respect to $\partial E$.
 Then there exists unique bridge arc $\beta_1'$ replacing $\beta_1$ so that it is not isotopic to $\beta_1$  and $\{\beta_1',\beta_2,\beta_3\}$ is a normal form of $T$ with respect to $\partial E$.
\end{Thm}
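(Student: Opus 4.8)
The plan is to separate existence from uniqueness. Existence is already in hand: the standard normal jump move constructed just above produces a bridge arc $\mu_2'$ with $\{\mu_2',\beta_2,\beta_3\}$ a normal form and $\mu_2'\not\simeq\beta_1$ (indeed $\mu_1'\simeq\beta_1$ while $\mu_1'\not\simeq\mu_2'$), so at least one replacement exists. The real content is uniqueness, and the first move is to recast the problem as a question about arcs in an annulus. Remove small regular neighborhoods of $\beta_2$ and $\beta_3$ from $\Sigma_{0,6}$; since $\beta_2$ and $\beta_3$ join four of the punctures in two pairs, the result is an annulus $A$ whose two boundary circles are $\partial N(\beta_2)$ and $\partial N(\beta_3)$ and which contains the two remaining punctures $e,f$, the endpoints of $\beta_1$. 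Any candidate $\beta_1'$ must be disjoint from $\beta_2\cup\beta_3$ and join $e$ to $f$, hence is a simple arc from $e$ to $f$ in $A$; conversely, by Lemma~\ref{L1} every such arc is automatically a bridge arc. So uniqueness amounts to the statement that, among simple arcs from $e$ to $f$ in $A$ that are in normal form with respect to $\partial E$, there are exactly two isotopy classes.

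Next I would classify the simple arcs from $e$ to $f$ in $A$ up to isotopy. Passing to the universal cover of the annulus, each such arc is determined up to isotopy by a single integer, its winding number about the core of $A$ relative to a fixed reference arc, and for each integer there is exactly one isotopy class of simple arc; thus the candidates form a $\mathbb{Z}$-family. I would then locate $\beta_1$ and $\mu_2'$ in this family. By the construction of the standard normal jump move, $\mu_1'\,(\simeq\beta_1)$ and $\mu_2'$ meet only at $e,f$ and their union $\mu_1'\cup\mu_2'$ separates $\beta_2$ from $\beta_3$, i.e.\ it is a core-parallel (essential) simple closed curve $C$ of $A$. Two arcs with common endpoints whose union is an essential circle have winding numbers differing by exactly $1$; hence $\beta_1$ and $\mu_2'$ occupy two consecutive slots, say $w$ and $w+1$, of the $\mathbb{Z}$-family, and in particular they are non-isotopic.

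The crux is to show that the normal form condition selects precisely these two consecutive winding numbers. For one direction, $\beta_1$ is a normal form by hypothesis and $\mu_2'$ is a normal form by the Claim proved above, so $w$ and $w+1$ both occur. For the converse I would show that an arc $\alpha$ of winding $w+k$ with $k\le -1$ or $k\ge 2$ cannot be in normal form. The idea is to track $\alpha\cap\partial E$: such an $\alpha$ must cross the essential circle $C=\mu_1'\cup\mu_2'$, and each unit of winding beyond the interval between $w$ and $w+1$ forces $\alpha$ to traverse $A$ an extra time and therefore to deposit extra intersection points on the windows $\omega_i$. Examining the windows as in Figure~\ref{c6}, the intersection points of $\beta_2$ and $\beta_3$ cut each $\omega_i$ into finitely many legal ``slots'' into which the points of $\alpha$ must be inserted without two of them becoming adjacent; once the winding leaves $\{w,w+1\}$ a pigeonhole forces two adjacent intersections of $\alpha$ on some $\omega_i$, equivalently a bigon of $\alpha$ with $\partial E$, in either case contradicting that $\{\alpha,\beta_2,\beta_3\}$ is simultaneously in standard position and in normal form. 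Hence the only normal forms are $\beta_1$ (winding $w$) and $\mu_2'$ (winding $w+1$), so the replacement $\beta_1'\simeq\mu_2'$ is unique.

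I expect this last step, quantifying how excess winding forces a normal-form violation, to be the main obstacle, since it requires a uniform accounting of the extra $\partial E$-intersections against the available window slots rather than a soft topological argument. A cleaner packaging, which I would attempt in parallel, replaces the global winding count by a purely local analysis at the endpoints: by the normal form condition the bridge arc adjacent to $e$ (and likewise to $f$) is forced to be $\beta_2$ or $\beta_3$, so $\beta_1'$ must initially ``hug'' that adjacent arc, and propagating this constraint along the windows forces $\beta_1'$ to be isotopic either to $\beta_1$ or to the single arc obtained by jumping over the adjacent arc, namely $\mu_2'$. Either route reduces the theorem to the window combinatorics already set up for the Claim.
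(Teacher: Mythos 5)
Your reduction is attractive and, as far as it goes, correct: cutting $\Sigma_{0,6}$ along $N(\beta_2)\cup N(\beta_3)$ does give an annulus $A$ containing the endpoints of $\beta_1$, Lemma~\ref{L1} does make every embedded $e$--$f$ arc in $A$ a legitimate candidate, the candidates are classified by a winding number, and since $\mu_1'\cup\mu_2'$ is an essential circle in $A$ the arcs $\beta_1$ and $\mu_2'$ do sit at consecutive windings $w$ and $w+1$. The existence half thus matches the paper. But the uniqueness half --- that no winding outside $\{w,w+1\}$ admits a normal form --- is the entire content of the theorem, and your proposal only gestures at it. The concrete flaw is that your pigeonhole is \emph{count-based}: it tracks how many points an extra traversal of $A$ deposits on the windows $\omega_i$. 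Any such counting argument is essentially symmetric in the two winding directions, whereas the statement you must prove is asymmetric: the arc at winding $w-1$ (morally, the jump over $\beta_3$ rather than $\beta_2$) deposits a number of window points comparable to the admissible arc at $w+1$, yet exactly one of the two is normal. Normality is an \emph{adjacency-pattern} condition, not a cardinality condition --- which of $\beta_2$, $\beta_3$ owns the neighboring intersection points in each gap of $\omega_i$, and in particular which arc is adjacent to the endpoints of $\beta_1$ --- so counting traversals cannot decide between $w-1$ and $w+1$, and a fortiori cannot rule out all $k\le -1$, $k\ge 2$. Your alternative ``cleaner packaging'' (propagate the endpoint constraint along the windows) names the right mechanism but simply asserts the propagation; that propagation \emph{is} the proof.

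For comparison, the paper closes exactly this gap by a colored-gap analysis rather than a count: for each pair of adjacent points of $(\beta_2\cup\beta_3)\cap\partial E$ on some $\partial E_i$ it shows (i) if the points have different colors and $\beta_1$ crosses the gap between them, no second candidate $\beta_1''$ can cross it (because $\beta_1\cup\beta_1''$ separates $\beta_2$ from $\beta_3$, a path returning through the union re-enters the same side); (ii) if they have different colors and $\beta_1$ does not cross, then $\beta_1''$ must; (iii) if they have the same color, normality forces both $\beta_1$ and $\beta_1''$ through the gap. This pins the $\partial E$-intersection pattern of any candidate down to at most two possibilities, and Theorem~\ref{T2} converts equality of patterns into equality of isotopy classes. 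If you want to retain your annulus coordinates you would still have to import this case analysis; the winding number does buy you a clean enumeration of candidates and quietly handles the configuration the paper must treat separately (when $\beta_1\cup\beta_1''$ has interior intersections and cuts $S^2$ into more than two regions), but by itself it does not produce the exclusion, so as written the proposal does not prove the theorem.
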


\begin{proof}

 Let $\beta_1'=\mu_2'$. Then, we note that $\beta_1\cup\beta_1'$  separates $\beta_2$ and $\beta_3$. The first diagram of Figure~\ref{c9}  is the schematic picture to explain the situation.   
Let $A$ and $B$ be the two disk regions bounded by $\beta_1\cup\beta_1'$ in $S^2$. Assume that $A$ contains $\beta_2$ and $B$ contains $\beta_3$. 
\begin{figure}[htb]
\includegraphics[scale=.5]{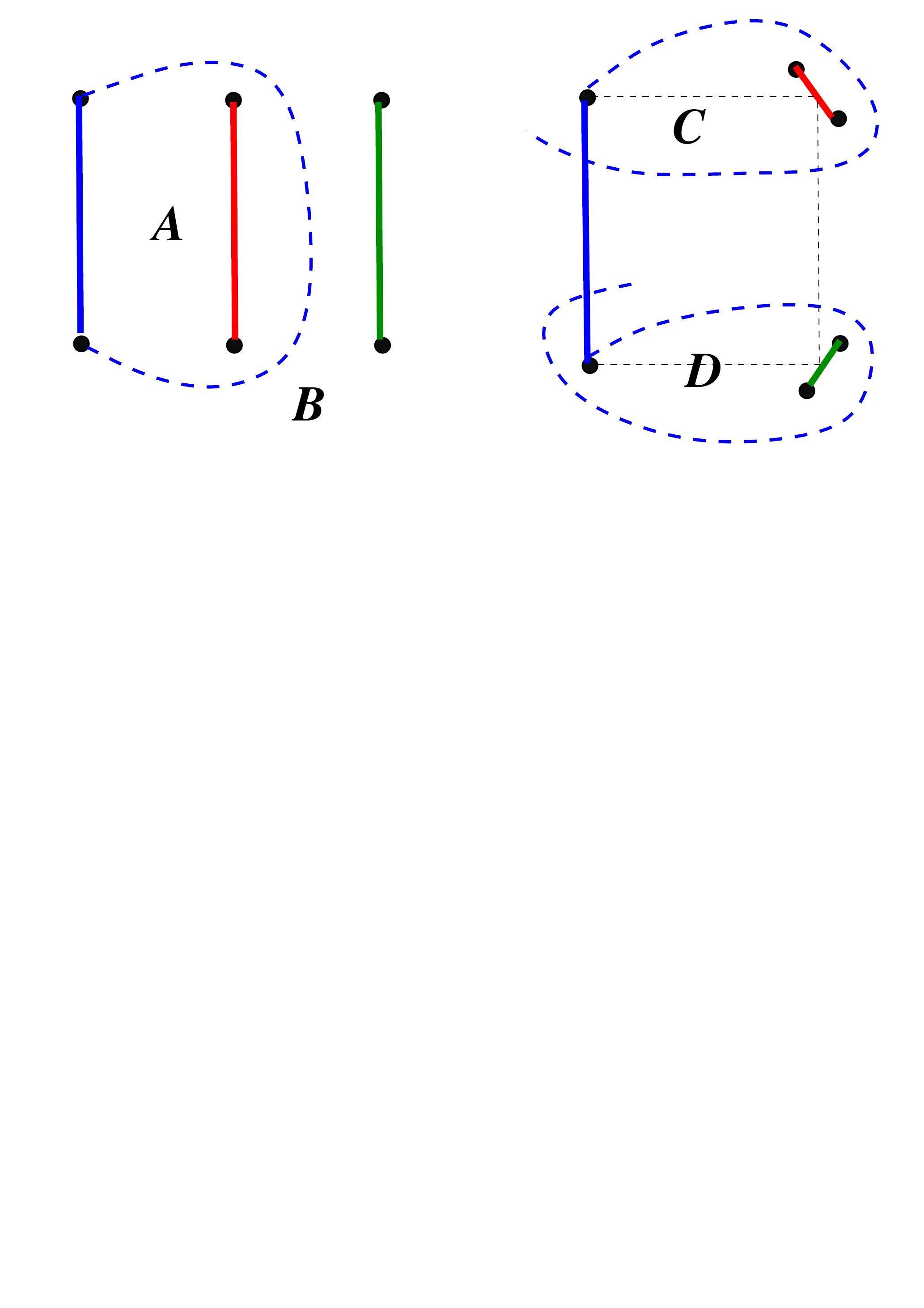}
\vskip -280pt
\caption{}\label{c9}
\end{figure}
In order to have a contradiction, we assume that there exists $\beta_1''$ which is  isotopic to none of $
\beta_1$ and $\beta_1'$ and $\{\beta_1'',\beta_2,\beta_3\}$ is a normal form with respect to $\partial E$.
We note that $\beta_1\cup\beta_1''$ separates $\beta_2$ and $\beta_3$ as well. We first assume that $\beta_1\cup\beta_1''$ separates $S^2$ into two connected components as in the first diagram of Figure~\ref{c9}. We consider the point components of $(\beta_2\cup\beta_3)\cap\partial E$. We first assume that there are adjacent two different coloured points of $(\beta_2\cup\beta_3)\cap\partial E$ in $\partial E_i$ for some $i$ and $\beta_1$ passes through the subarc of $\partial E_i$ connecting the two points. Then $\beta_1''$ cannot pass through the same subarc since a simple path from a point of $\beta_2$ should be back to the component $A$ when it meets $\beta_1\cup\beta_1''$ twice. So, the simple path cannot end at a point of $\beta_3$.\\

 Secondly, we assume that there are adjacent two different coloured components of  $(\beta_2\cup\beta_3)\cap\partial E$ in $\partial E_i$ for some $i$ and $\beta_1$ do not pass through the subarc of $\partial E_i$ connecting the two components. We claim that $\beta_1''$ should pass through the subarc of $\partial E_i$.  It is followed by the fact that a path from a point of $\beta_2$ to a point of $\beta_3$ should meet $\beta_1\cup\beta_1''$.\\
 
 At last, we assume that there are adjacent two components of  $(\beta_2\cup\beta_3)\cap\partial E$ with same color  in $\partial E_i$ for some $i$. It is clear that $\beta_1$  passes through the subarc of $\partial E_i$ connecting them because of the normality. We note that $\beta_1''$ should pass through the same subarc of $\partial E_i$ because of the normality.\\

 From the three cases above, we note that the intersecting patterns of $\beta_1'$ and $\beta_1''$ with  $\partial E$ are the same. This implies that $\beta_1'$ and $\beta_1''$ are isotopic by Theorem~\ref{T2}.\\

Now, we assume that $\beta_1\cup\beta_1''$ separates $S^2$ into more than two connected components. Let $C$ and $D$ be the connected regions containing $\beta_2$ and $\beta_3$ respectively as in the second diagram of Figure~\ref{c9}. We note there is no common face between $C$ and $D$. In other words, they meet at a point or they are disjoint. By using a similar argument for the previous case, we can check that the intersecting patterns of $\beta_1$ and $\beta_1''$ with $\partial E$ are the same. This implies that $\beta_1''$ is isotopic to $\beta_1$. This makes a contradiction and it completes the proof.
\end{proof}

\begin{Thm}\label{L7}
Let $\mathcal{B}$ and $\mathcal{B}'$ be two normal forms of $T$ with respect to $\partial E$. Then $\mathcal{B}$ can be obtained from $\mathcal{B}'$ by a sequence of  normal jump moves. 
\end{Thm}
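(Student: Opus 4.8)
The plan is to recast the statement as the connectivity of the \emph{normal-form graph} $G(T)$, whose vertices are the isotopy classes of normal forms of $T$ with respect to $\partial E$ and whose edges join two normal forms that differ by a single normal jump move. By Theorem~\ref{L5}, fixing two of the three arcs of a normal form leaves exactly one nontrivial normal completion of the third, so every vertex of $G(T)$ has three neighbors --- one for each arc that is replaced --- and the move on each fixed slot is an involution (the ``other'' completion of $\{\beta_2,\beta_3\}$ beside $\beta_1'$ is $\beta_1$ itself). The theorem is precisely the assertion that $G(T)$ is connected, and I would prove this by descent on a complexity.

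First I would set $c(\mathcal{B}) = |\beta \cap \partial E|$, the geometric intersection number of $\beta = \beta_1\cup\beta_2\cup\beta_3$ with $\partial E = \partial E_1\cup\partial E_2\cup\partial E_3$ for $\mathcal{B}$ in standard position; in Dehn coordinates this is a monotone function of $p_1+p_2+p_3$. The key descent lemma I want is: if $\mathcal{B}$ is not of minimal complexity among all normal forms of $T$, then at least one of its three standard normal jump moves yields a normal form $\mathcal{B}_1$ with $c(\mathcal{B}_1) < c(\mathcal{B})$. To prove it I would argue locally at the windows $\omega_i$. For a chosen slot, Theorem~\ref{L5} says the two normal completions $\beta_1$ and $\beta_1'$ of $\{\beta_2,\beta_3\}$ are the only options and that the standard normal jump move carries one to the other; I would track, window by window, how the endpoints of $\beta_1'$ on $\partial E$ are produced from those of $\beta_1$ by the construction of $\mu_2'$, using normality --- no two adjacent intersections on a window belong to the same arc --- to rule out the configurations that would raise the count. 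The content is that exactly one of $\beta_1,\beta_1'$ has the smaller intersection with $\partial E$, so from $\mathcal{B}$ there is always a distinguished downward replacement in each slot.

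Descent then carries both $\mathcal{B}$ and $\mathcal{B}'$ to complexity-minimal normal forms $\mathcal{M}$ and $\mathcal{M}'$ of $T$, and it remains to connect these in $G(T)$. Here I would follow the paper's two cases. If some $p_i=0$ (Case~1), then $E_i$ contains an entire bridge arc, and by the theorem of Case~1 together with Figure~\ref{c10} all such normal forms of a fixed $T$ are obtained from one another by the explicit jump moves over the arc lying in $E_i$, each of which is normal and leaves the remaining $p_j$ unchanged; hence $\mathcal{M}$ and $\mathcal{M}'$ lie on a single $G(T)$-path. If all $p_i\ge 2$ (Case~2), I would establish that the complexity-minimal normal forms of $T$ form a single orbit under normal jump moves, using the rigidity of Corollary~\ref{T12} --- a system is the unique normal form with respect to its own curve system $\gamma$ --- together with Theorem~\ref{T11} and the classification Theorem~\ref{T2} to reduce to the tractable local picture in which only finitely many normal completions are available. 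In either case $\mathcal{B}$ and $\mathcal{B}'$ are connected by normal jump moves, which proves the theorem.

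The main obstacle is the descent lemma, and more precisely the danger of \emph{local minima}: the argument breaks if there is a normal form at which no single slot-replacement strictly decreases $c$ yet which is neither minimal nor equal to the target. I would confront this by proving that local minimality of $c$ (no standard normal jump move decreases it) forces global minimality, which is exactly where the uniqueness of the normal completion in Theorem~\ref{L5} and the rigidity in Corollary~\ref{T12} carry the weight. A secondary --- and I expect genuinely hard --- point is the Case~2 base case: connecting the minimal normal forms is itself a restricted version of the theorem, and I would need to verify that the moves of Figure~\ref{c10} and their Case~2 analogues are honestly \emph{normal} jump moves realizing the full family of minimal forms, rather than merely asserting uniqueness of the minimizer.
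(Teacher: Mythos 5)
Your proposal has a genuine gap, and it sits exactly where you flag it. Your descent is run on $c(\mathcal{B})=|\beta\cap\partial E|$, i.e.\ complexity measured against the \emph{fixed} curve system $\partial E$, and this choice creates two problems that your sketch defers rather than solves. First, the descent lemma --- that any non-minimal normal form admits a standard normal jump move strictly decreasing $c$ --- is not proved and is not a consequence of anything you cite: Theorem~\ref{L5} only says each slot has a unique alternative completion, and nothing there controls whether that alternative meets $\partial E$ fewer times; your window-by-window tracking of $\mu_2'$ would have to rule out local minima, and your answer to that danger (``prove that local minimality forces global minimality'') is a restatement of the missing step, not an argument. Second, even granting descent, your base case --- that the complexity-minimal normal forms in Case~2 form a single orbit under normal jump moves --- is, as you concede, a restricted version of the theorem itself, and the appeal to Corollary~\ref{T12}, Theorem~\ref{T11} and Theorem~\ref{T2} ``to reduce to the tractable local picture'' supplies no mechanism for producing the connecting moves.

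The paper avoids both difficulties with one change of complexity: it measures $\mathcal{B}'$ against the curve system $\gamma=\gamma_1\cup\gamma_2\cup\gamma_3$ obtained from the \emph{target} $\mathcal{B}$, not against $\partial E$. Corollary~\ref{T12} then does double duty. If $\mathcal{B}'\neq\mathcal{B}$, then $\mathcal{B}'$ is not normal with respect to $\gamma$, so some $\beta_j'$ has two adjacent intersections with some $\gamma_i$; the bridge arc replacement across that subarc of $\gamma_i$ is a jump move that strictly decreases $|\beta'\cap\gamma|$, so a decreasing move always exists --- no local minima. And the unique terminal point of the descent is the unique normal form with respect to $\gamma$, which is $\mathcal{B}$ itself --- no separate base case, no Case~1/Case~2 split, no orbit analysis of minimizers. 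The actual technical content of the paper's proof, which your outline does not engage, is showing that the replaced system $\{\beta_1'',\beta_2',\beta_3'\}$ is still normal with respect to $\partial E$ (so the move is a \emph{normal} jump move); this is done via the two claims about adjacent intersections on $\partial E$, using crucially that $\beta_1'\cup\beta_1''$ is a simple closed curve separating $\beta_2'$ from $\beta_3'$. If you want to salvage your graph-connectivity framing, the fix is to replace your potential function $c$ by $|\beta'\cap\gamma|$ and let the rigidity of Corollary~\ref{T12} identify the minimum, then prove the normality-preservation claim; as written, the two steps carrying all the weight in your plan are unproved.
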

\begin{proof}
\begin{figure}[htb]
\includegraphics[scale=.88]{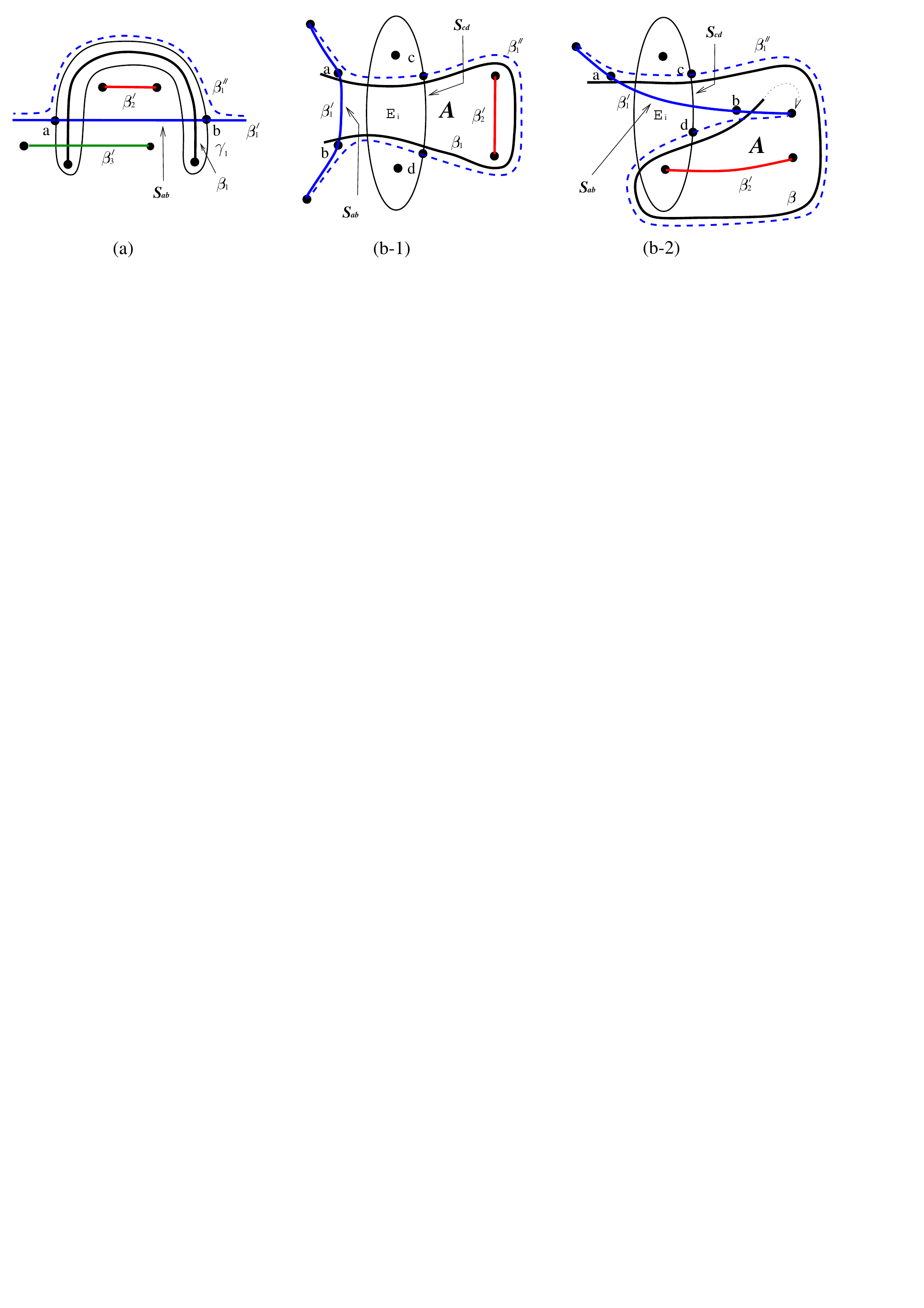}
\vskip -600pt
\caption{}\label{c15}
\end{figure}

Let $\mathcal{B}=\{\beta_1,\beta_2,\beta_3\}$ and $\mathcal{B}'=\{\beta_1',\beta_2',\beta_3'\}$ be  two distinct normal forms of $T$ with respect to $\partial E$. Let $\gamma_1,\gamma_2,\gamma_3$ be simple closed curves obtained from $\beta_1,\beta_2,\beta_3$ respectively so that they are pairwise disjoint. Let $\gamma=\gamma_1\cup\gamma_2\cup\gamma_3$. 
By Corollary~\ref{T12}, there exist $\gamma_i$ and $\beta_j'$ so that two of the intersections between $\gamma_i$ and $\beta_j'$ are adjacent in $\gamma_i$.   Without loss of generality,  we assume that $\beta_1'$ has a pair of adjacent intersections $a$ and $b$ with $\gamma_1(\subset\gamma)$. In other words, the subarc of $\gamma_1$ between $a$ and $b$  have no intersection with $\beta'=\beta_1'\cup\beta_2'\cup\beta_3'$ as in the diagram (a) of Figure~\ref{c15}. Let $S_{ab}$ be the subarc of $\beta_1'$ between $a$ and $b$. We emphasize that the diagram (a) is one of possible schematic diagrams. Let $\beta_1''$ be the bridge arc obtained from $\beta_1'$  by the \textbf{BR} with respect to $\gamma$ as in the  diagram (a) of Figure~\ref{c15}. We isotope $\beta_1''$ slightly so that it does not intersect with $\beta_1'$ except the two  endpoints. For convenience, we use the modified simple arc as $\beta_1''$. We note that $\beta_1''$ is not isotopic to $\beta_1'$ since $|\beta_1''\cap \gamma|<|\beta_1'\cap \gamma|$. Moreover, it does not intersect with $\beta_2'\cup\beta_3'$.
So, we note that $\beta_1'\cup\beta_1''$ is a simple closed curve which separates $\beta_2'$ and $\beta_3'$. \\

Now, we show that $\{\beta_1'',\beta_2',\beta_3'\}$ is a normal form of $T$ with respect to $\partial E$. Then this completes the proof of this theorem since $|(\beta_1''\cup\beta_2'\cup\beta_3')\cap\gamma|<|(\beta_1'\cup\beta_2'\cup\beta_3')\cap\gamma|<\infty$. \\

We first claim that there is no adjacent  intersections belonging to only $\beta_2'$ or $\beta_3'$ in $\partial E$  when we have $\{\beta_1'',\beta_2',\beta_3'\}$. 
If there exist adjacent intersections belonging to only $\beta_2'$ or $\beta_3'$, then there was a single intersection of $\beta_1'$ with $\partial E$ between them since $\{\beta_1',\beta_2',\beta_3'\}$ is a normal form of $T$ with respect to $\partial E$. Since $\beta_1'\cup\beta_1''$ separates $\beta_2'$ and $\beta_3'$, the given intersections should belong to $\beta_2'$ and $\beta_3'$ respectively. This makes a contradiction. Therefore, the claim works.\\

Now, we claim that there is no  adjacent  intersections belonging to only $\beta_1''$ in $\partial E$ as well when we have $\{\beta_1'',\beta_2',\beta_3'\}$.  In order to prove the claim, we assume that there are such adjacent intersections. Then, there are two possible cases as  the diagrams (b-1) and (b-2) of Figure~\ref{c15}, where the points $c$ and $d$ are the two adjacent intersections belonging to $\beta_1''$ in $\partial E$.  Let $S_{cd}$ be the subarc of $\partial E_j$ which connects $c$ and $d$ so that it contains no more intersections with $\beta_1''\cup\beta_2'\cup\beta_3'$. Then $S_{cd}$ and the subarc of $\beta_1''$ cobounds a simple closed curve $S$ so that $S$ encloses one of $\beta_2'$ and $\beta_3'$. Without loss of generality, we assume that $S$ encloses $\beta_2'$. Since $\beta_1'\cup\beta_1''$ separates $\beta_2'$ and $\beta_3'$ and $S$ separates $\beta_2'$ and $\beta_3'$, the intersection patterns between $\beta_2\cup\beta_3$ and $S_{ab}$, and between  $\beta_2\cup\beta_3$ and $S_{cd}$ are the same up to isotopy.   If either $\beta_2$ or $\beta_3$ is isotopic to $\beta_2'$ then it violates the condition that $\mathcal{B}$ is a normal form with respect to $\partial E$ since either $(\beta_2\cup\beta_3)\cap S_{cd}$ is empty set or there are more than one intersection of $\beta_2$ or $\beta_3$ with $S_{cd}$. 
So, we assume that none of $\beta_2$ and $\beta_3$ is isotopic to $\beta_2'$. Let $A$ be the region bounded by the simple closed curve $\widehat{S}$ consisting of the subarcs of $S_{ab}$(or the extended arc of $S_{ab}$) and  $\beta_1$ which contains $\beta_2'$ as in the diagrams of Figure~\ref{c15}. We note that the interior of $A$ does not intersect with $\beta_1$.
 We also note that there exists a simple arc connecting the remaining two punctures not the four punctures in $\beta_1\cup\beta_2'$ so that it is disjoint with  $\beta_1\cup\beta_2'$ and $A$. Then it would be a bridge arc by Lemma~\ref{L1}. Let $\beta_3''$ be the bridge arc. Then $\{\beta_1,\beta_2',\beta_3''\}$ is a system of the same rational $3$-tangle $T$.   By taking pairwise disjoint three simple closed curves $\gamma_1,\gamma_2'$ and $\gamma_3''$ obtained from $\beta_1,\beta_2'$ and $\beta_3''$, we investigate $\gamma_2$ and $\gamma_3$. We note that $\{\beta_1,\beta_2,\beta_3\}$ is also a system of $T$. By Corollary~\ref{T12}, we note that there exist adjacent two intersections of $\beta_2\cup\beta_3$ with $S_{ab}$  belonging to the same $\beta_2$ or $\beta_3$ since $\beta_1$ is disjoint with $\gamma_1\cup\gamma_2'\cup\gamma_3''$. Moreover, we recall that  the intersection patterns between $\beta_2\cup\beta_3$ and $S_{ab}$, and $\beta_2\cup\beta_3$ and $S_{cd}$ are the same. It makes a contradiction to the condition that $\mathcal{B}=\{\beta_1,\beta_2,\beta_3\}$ is a normal form with respect to $\partial E$. This completes the claim. Finally, we proved that $\{\beta_1'',\beta_2',\beta_3'\}$ is a normal form of $T$ with respect to $\partial E$.\\

\end{proof}

 \section*{Acknowledgements}

 This research was supported by Basic Science Research Program through the National Research Foundation of Korea (NRF)  funded by the the Ministry of Education (NRF-2020R1I1A1A01052279).

\end{document}